\documentclass[11pt]{article}
\usepackage[utf8x]{inputenc}
\usepackage[T1]{fontenc}
 \usepackage{amsfonts}
 \usepackage{amsmath}

 \textheight 200 true mm
 \textwidth  165 true mm
 \topmargin -5 true mm
 \oddsidemargin -2 true mm
 \evensidemargin 100 true mm
 %
 %
 
 \newcommand{\be}{\begin{equation}}
 \newcommand{\ee}{\end{equation}}
 \newcommand{\bea}{\begin{eqnarray}}
 \newcommand{\eea}{\end{eqnarray}}
 \newcommand{\beas}{\begin{eqnarray*}}
\newcommand{\eeas}{\end{eqnarray*}}
%
%
\newtheorem{theorem}{Theorem}[section]

\newtheorem{proposition}[theorem]{Proposition}
\newtheorem{corollary}[theorem]{Corollary}
\newtheorem{lemma}[theorem]{Lemma}
\newtheorem{remark}[theorem]{Remark}
\newtheorem{example}[theorem]{Example}
\newtheorem{examples}[theorem]{Examples}
\newtheorem{foo}[theorem]{Remarks}

\newenvironment{Examples}{\begin{examples}\rm}{\end{examples}}

\newenvironment{Remarks}{\begin{foo}\rm}{\end{foo}}
%
%
\newenvironment{proof}{\addvspace{\medskipamount}\par\noindent{\it Proof}.}
{\unskip\nobreak\hfill$\Box$\par\addvspace{\medskipamount}}

\newcommand{\ang}[1]{\left<#1\right>}  
\newcommand{\brak}[1]{\left(#1\right)}    
\newcommand{\crl}[1]{\left\{#1\right\}}   

\newcommand{\dom}{{\rm dom}\,}

\newcommand{\N}[1]{||#1||}     
\newcommand{\abs}[1]{\left|#1\right|}     
\newcommand{\qed}{\unskip\nobreak\hfill$\Box$\par\addvspace{\medskipamount}}

\title{Representation of increasing convex functionals\\ 
with countably additive measures}

\author{Patrick Cheridito\thanks{Department of Mathematics, ETH Zurich, Switzerland.} 
\and Michael Kupper\thanks{Department of Mathematics, University of Konstanz, Germany.} 
\and Ludovic Tangpi\thanks{ORFE, Princeton University, USA.}}

\date{} 

\begin{document}
\maketitle

\begin{abstract}
We derive two types of representation results for increasing convex functionals in terms of countably additive 
measures. The first is a max-representation of functionals defined on spaces of real-valued continuous functions and 
the second a sup-representation of functionals defined on spaces of real-valued Borel measurable functions. Our 
assumptions consist of sequential semicontinuity conditions which are easy to verify in 
different applications.
\end{abstract}

\section{Introduction}
\label{sec:intro}

Let $\phi \colon X \to \mathbb{R} \cup \crl{+ \infty}$ be an increasing convex functional 
on a linear space $X$ of functions $f \colon \Omega \to \mathbb{R}$. More precisely, $\phi$ is convex and satisfies 
$\phi(f) \ge \phi(g)$ for $f \ge g$, where the second inequality is understood pointwise.
By $I(\phi)$ we denote the algebraic interior of the effective domain 
\[
\dom \phi := \crl{f \in X : \phi(f) < + \infty};
\] 
that is, $I(\phi)$ consists of all 
$f \in \dom \phi$ with the property that for every $g \in X$, 
there is an $\varepsilon > 0$ such that $f + \lambda g \in \dom \phi$ for all
$0 \le \lambda \le \varepsilon$. 

If $X$ is a linear space of  bounded measurable functions 
on a measurable space $(\Omega, {\cal F})$ containing all indicator functions $1_A$, 
$A \in {\cal F}$, it follows from standard convex duality arguments (see Section \ref{sec:max}) that 
\be \label{maxrep}
\phi(f) = \max_{\mu \in ba^+({\cal F})} 
\brak{\ang{f,\mu} - \phi^*_X(\mu)} \quad \mbox{for all } f \in I(\phi),
\ee
where $ba^+({\cal F})$ is the set of all finitely additive measures $\mu$ on ${\cal F}$ 
satisfying $\mu(\Omega) < +\infty$, $\ang{f,\mu}$ denotes the integral $\int_{\Omega} f d\mu$, 
and $\phi^*_X$ is the convex conjugate of $\phi$, given by
\be \label{defconj}
\phi^*_X(\mu) := \sup_{f \in X} \brak{\ang{f,\mu} - \phi(f)}.
\ee

In applications, a representation like \eqref{maxrep} is often more useful if it 
is in terms of countably instead of finitely additive measures. This paper provides  
such representations under different sequential semicontinuity conditions 
that are easy to verify in various concrete situations.

For a non-empty set $\Omega$, we call a linear subspace $X$ of $\mathbb{R}^{\Omega}$ a
{\sl Stone vector lattice} if for all $f,g \in X$, the point-wise minima $f \wedge g$ and $f \wedge 1$ 
also belong to $X$. By $\sigma(X)$ we denote the smallest $\sigma$-algebra on $\Omega$ 
making all functions $f \in X$ measurable with respect to the Borel $\sigma$-algebra on $\mathbb{R}$
and by $ca^+(X)$ all (countably additive) measures on $\sigma(X)$ satisfying $\ang{f,\mu} < + \infty$ for every 
$f \in X^+ := \crl{g \in X : g \ge 0}$.
$\phi^*_X \colon ca^+(X) \to \mathbb{R} \cup \crl{+ \infty}$ is defined as in \eqref{defconj}. 
If a sequence $(f_n)$ in $X$ converges pointwise from above to $f \in X$, we write $f_n \downarrow f$. 
Analogously, $f_n \uparrow f$ means pointwise convergence from below.

The following proposition is a non-linear extension of the Daniell--Stone theorem 
(see, e.g., Theorem 4.5.2 in \cite{Dudley}) 
and provides context to our main results, Theorems \ref{thmAB} and \ref{thm:Borel} below.
All proofs are given in Sections \ref{sec:max} and \ref{sec:sup}.

\begin{proposition} \label{prop:conv}
Let $\phi \colon X \to \mathbb{R} \cup \crl{+\infty}$ be an increasing convex functional on a 
Stone vector lattice $X$ over a non-empty set $\Omega$. Then
the implications {\rm (i)} $\Rightarrow$ {\rm (ii)} $\Rightarrow$ {\rm (iii)} $\Rightarrow$ {\rm (iv)}
$\Rightarrow$ {\rm (v)} $\Rightarrow$ {\rm (vi)} hold among the conditions:
\begin{itemize}
\item[{\rm (i)}]
There exists an $f \in I(\phi)$ such that $\phi(f_n) \downarrow \phi(f)$ for every sequence $(f_n)$ in 
$X$ satisfying $f_n \downarrow f$ 
\item[{\rm (ii)}]
$\phi(f_n) \downarrow \phi(f)$ for each $f \in I(\phi)$ and every 
sequence $(f_n)$ in $X$ satisfying $f_n \downarrow f$ 
\item[{\rm (iii)}]
For each $f \in I(\phi)$ and every sequence $(f_n)$ in $X^+$ satisfying $f_n \downarrow 0$
there exists a constant $\varepsilon > 0$ such that $\phi(f + \varepsilon f_n) \downarrow \phi(f)$
\item[{\rm (iv)}]
$\phi(f) = \max_{\mu \in ca^+(X)} (\ang{f,\mu} - \phi^*_{X}(\mu))$ for all $f \in I(\phi)$
\item[{\rm (v)}]
$\phi(f) = \sup_{\mu \in ca^+(X)} (\ang{f,\mu} - \phi^*_{X}(\mu))$ for all $f \in I(\phi)$
\item[{\rm (vi)}]
$\phi(f_n) \uparrow \phi(f)$ for each $f \in I(\phi)$ and every sequence $(f_n)$ in $X$ satisfying
$f_n \uparrow f$.
\end{itemize}
\end{proposition}
We are interested in representations of the form (iv) and (v). If $\phi$ is real-valued and linear, 
(i) is Daniell's condition \cite{Daniell} and equivalent to each of (ii), (iii) and (vi). 
However, for an increasing convex $\phi$, (i)--(iii) do not necessarily follow from (vi).
Also, in general (iii) is weaker than (ii), and there exist examples which do not satisfy any of the conditions (i)--(vi). 
These points are illustrated in the following 

\begin{Examples} \label{ex}
Consider the Stone vector lattice $l^{\infty}$ of all bounded functions $f \colon \mathbb{N} \to \mathbb{R}$,
where we use the convention $\mathbb{N} = \crl{1,2,...}$.
Denote by $ca^+_1(\mathbb{N})$ the set of all probability measures on $\mathbb{N}$ and 
by $ba^+_1(\mathbb{N})$ the set of all finitely additive probability measures on $\mathbb{N}$, that is,
all finitely additive measures $\mu$ on $\mathbb{N}$ satisfying $\mu(\mathbb{N}) =1$.\\[2mm]
{\bf 1.}
$s(f) := \sup_m f(m)$ defines an increasing convex functional $s \colon l^{\infty} \to \mathbb{R}$ which clearly fulfills (vi). 
It can easily be checked that the convex conjugate of $s$ is $s^*_{l^{\infty}}(\mu) = 0$ if $\mu$ belongs to 
$ba^+_1(\mathbb{N})$ and $s^*_{l^{\infty}}(\mu) = +\infty$ for all 
$\mu \in ba^+(\mathbb{N}) \setminus ba^+_1(\mathbb{N})$. One obviously has
\be \label{srep}
s(f) = \sup_{\mu \in ca^+_1(\mathbb{N})} \ang{f,\mu},
\ee
and it follows from \eqref{maxrep} that 
\be \label{smax}
s(f) = \max_{\mu \in ba^+_1(\mathbb{N})} \ang{f,\mu}.
\ee
\eqref{srep} is of the form (v). Moreover, for all $f \in l^{\infty}$ attaining their supremum, 
the supremum in \eqref{srep} is attained by a Dirac measure. But if $f \in l^{\infty}$ does not attain its
supremum, $s(f)$ cannot be written in the form (iv). So $s$ satisfies (v)--(vi) but
not (i)--(iv).\\[2mm]
{\bf 2.}
\be \label{repp}
p(f) = \sup_{\mu \in ca^+(\mathbb{N})} \ang{f,\mu}
\ee
defines an increasing convex functional $p \colon l^{\infty} \to \mathbb{R} \cup \crl{+ \infty}$ mapping $f$ to 
$0$ or $+ \infty$ depending on whether $s(f) \le 0$ or $s(f) > 0$. So 
$f$ belongs to $I(p)$ if and only if $s(f) < 0$, in which case the supremum in \eqref{repp} is attained.
It is easy to see that $p$ fulfills (iii) but not (ii). So by Proposition \ref{prop:conv}, it satisfies (iii)--(vi) but violates (i)--(ii).\\[2mm]
{\bf 3.}
Now pick an increasing $f \in l^{\infty}$ that does not attain its supremum, and choose a $\mu \in ba^+_1(\mathbb{N})$
which maximizes \eqref{smax}. Then one has for all $n$,
\[
s(f) = \ang{f 1_{[1,n]}, \mu} + \ang{f1_{(n,\infty)},\mu} \le f(n) \mu[1,n] + s(f) (1-\mu[1,n]).
\]
It follows that $\mu[1,n] = 0$ for all $n$. So the positive linear functional $l \colon l^{\infty} \to \mathbb{R}$, given by
$l(f) := \ang{f,\mu}$, satisfies $l(1_{[1,n]}) = 0 < l(1) = 1$, showing that it violates condition (vi), and 
therefore also (i)--(v).
\end{Examples}

In the following we introduce four conditions, called (A), (B), (C) and (D), for an increasing convex 
functional $\phi \colon X \to \mathbb{R} \cup \crl{+\infty}$ on a Stone vector lattice $X$ of functions 
$f \colon \Omega \to \mathbb{R}$ defined on a topological space $\Omega$. 
(A) and (B) are sequential-continuity-from-above conditions, which both imply a max-representation 
like (iv) of Proposition \ref{prop:conv} in the case where $X$ is a stone vector lattice of continuous functions. 
Similarly, (C) and (D) are sequential-continuity-from-below conditions which allow us to derive a sup-representation 
similar to (v) of Proposition \ref{prop:conv} if $X$ is the set of all bounded Borel measurable functions on a 
Hausdorff space $\Omega$.

\begin{itemize}
\item[{\bf (A)}] 
For all $f \in I(\phi)$ and every sequence $(f_n)$ in $X^+$ satisfying $f_n \downarrow 0$,
there exists a constant $\varepsilon > 0$ such that for each constant $\delta > 0$, there are 
$m \in \mathbb{N}$, $g \in \mathbb{R}_+^{\Omega}$ and an increasing convex function
$\hat{\phi} \colon  Y \to \mathbb{R}$ on a convex subset $Y \subseteq \mathbb{R}_+^{\Omega}$ 
containing $\crl{0, f_m g, (\varepsilon-g)^+, \varepsilon f_n : n \ge m}$ so that 
\begin{itemize}
\item[{\rm (i)}]
$\crl{g < \varepsilon}$ is relatively compact,
\item[{\rm (ii)}]
$\hat{\phi}(f_mg) \le \delta$ and
\item[{\rm (iii)}]
$\hat{\phi}(\varepsilon f_n) \ge \phi(f+ \varepsilon f_n) - \phi(f)$ for all $n \ge m$.
\end{itemize}

\item[{\bf (B)}]
For all $f \in I(\phi)$ and every sequence $(f_n)$ in $X^+$ satisfying $f_n \downarrow 0$,
there exist functions $g,g_1,g_2, \dots$ in $\mathbb{R}_+^{\Omega}$ and numbers
$m, m_1, m_2, \dots$ in $\mathbb{N}$
together with an increasing convex function $\hat{\phi} \colon Y \to \mathbb{R}$ on a convex subset
$Y \subseteq \mathbb{R}_+^{\Omega}$ containing $\crl{0, f_n/m, g, g_n : n \ge m}$ so that 
\begin{itemize}
\item[{\rm (i)}]
$\crl{f_m >g/n}$ is relatively compact and contained in $\crl{m_n g_n \ge 1}$ for all $n \ge m$,
\item[{\rm (ii)}]
$\hat{\phi}(0) = 0$ and
\item[{\rm (iii)}]
$\hat{\phi}(f_n/m) \ge \phi(f+f_n/m) - \phi(f)$ for all $n \ge m$.
\end{itemize}
\end{itemize}

\begin{theorem} \label{thmAB}
Let $X$ be a Stone vector lattice $X$ of continuous functions $f \colon \Omega \to \mathbb{R}$ on 
a topological space $\Omega$ and $\phi \colon X \to \mathbb{R} \cup \crl{+ \infty}$ an increasing 
convex functional satisfying {\rm (A)} or {\rm (B)}. Then
\[
\phi(f) = \max_{\mu \in ca^+(X)} \brak{\ang{f,\mu} - \phi^*_X(\mu)} \quad 
\mbox{for all } f \in I(\phi).
\]
\end{theorem}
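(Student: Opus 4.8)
The plan is to deduce the representation from Proposition~\ref{prop:DS}: it suffices to show that each of {\rm (A)} and {\rm (B)} implies condition {\rm (iii)} there, since {\rm (iii)} $\Rightarrow$ {\rm (iv)}, and a Stone vector lattice of continuous functions is a Stone vector lattice. So fix $f \in I(\phi)$ and a sequence $(f^n)$ in $X^+$ with $f^n \downarrow 0$; we must exhibit an $\varepsilon > 0$ with $\phi(f + \varepsilon f^n) \downarrow \phi(f)$. In either case $\phi(f + \varepsilon f^n) - \phi(f)$ is nonnegative and nonincreasing in $n$, so it is enough to drive it to $0$ along the sequence. The one nontrivial analytic ingredient is Dini's theorem: since the $f^n$ are continuous and decrease pointwise to $0$, they converge to $0$ uniformly on every compact set, in particular on the closure of the relatively compact set provided by {\rm (A)(i)}, respectively {\rm (B)(i)}.

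Consider condition {\rm (B)} first, with its notation, and put $\varepsilon := 1/m$. Fix $n_0 \ge m$. Since $\{f^m > g/n_0\}$ is contained in $\{m^{n_0} g^{n_0} \ge 1\}$, we have $1_{\{f^m > g/n_0\}} \le m^{n_0} g^{n_0}$; since it is relatively compact, Dini gives $f^n \le \rho$ on its closure for all large $n$ and any prescribed $\rho \in (0,1]$; and on the complement $\{f^m \le g/n_0\}$ one simply has $f^n \le f^m \le g/n_0$. Combining these,
\[
\frac{f^n}{m} \ \le\ \frac{\rho\, m^{n_0}}{m}\, g^{n_0} + \frac{1}{m n_0}\, g
\qquad \text{for all sufficiently large } n .
\]
Choosing first $n_0$ and then $\rho$ so that the two coefficients on the right sum to at most $1$, the right-hand side is a convex combination of $g^{n_0}, g, 0 \in Y$, so monotonicity and convexity of $\hat\phi$ together with $\hat\phi(0)=0$ give
\[
0 \ \le\ \phi(f + f^n/m) - \phi(f) \ \le\ \hat\phi(f^n/m) \ \le\ \frac{\rho\, m^{n_0}}{m}\, \hat\phi(g^{n_0}) + \frac{1}{m n_0}\, \hat\phi(g)
\]
for all large $n$. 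Since $\hat\phi(g)$ and $\hat\phi(g^{n_0})$ are finite, taking $\limsup_n$ and then letting $\rho \downarrow 0$ and $n_0 \to \infty$ forces $\phi(f + f^n/m) \downarrow \phi(f)$, i.e. {\rm (iii)}.

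Condition {\rm (A)} is approached by the same mechanism applied to the family $\hat\phi = \hat\phi_\delta$, $\delta > 0$. With $\varepsilon$ as in {\rm (A)}, on $\{g \ge \varepsilon\}$ one has $\varepsilon f^n \le \varepsilon f^m \le g f^m = f^m g$, while on the relatively compact set $\{g < \varepsilon\}$ Dini makes $f^n$ uniformly small; together with the elementary pointwise bound $\varepsilon f^n \le f^m g + f^n (\varepsilon - g)^+$ (check the cases $g \ge \varepsilon$ and $g < \varepsilon$ separately) this yields $\varepsilon f^n \le f^m g + \rho (\varepsilon - g)^+$ for every $\rho \in (0,1]$ and all large $n$. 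One now wants to feed this into $\hat\phi_\delta$, using $\hat\phi_\delta(f^m g) \le \delta$, $\hat\phi_\delta(0) \le \delta$ and {\rm (A)(iii)}, and then let $n \to \infty$, $\rho \downarrow 0$, $\delta \downarrow 0$. This is where the main obstacle lies: the dominating function $f^m g + \rho (\varepsilon - g)^+$ is \emph{not} a convex combination of the available elements of $Y$ --- its total weight is $1+\rho$ --- and in fact $\varepsilon f^n$ cannot in general be dominated by such a convex combination with a small $\hat\phi_\delta$-value, the difficulty concentrating on the boundary $\{g = \varepsilon\}$. The remedy is to rescale: $\tfrac{1}{1+\rho}\varepsilon f^n \le \tfrac{1}{1+\rho} f^m g + \tfrac{\rho}{1+\rho}(\varepsilon - g)^+$ \emph{is} a genuine convex combination, whence $\hat\phi_\delta\!\big(\tfrac{\varepsilon}{1+\rho} f^n\big) \le \tfrac{1}{1+\rho}\delta + \tfrac{\rho}{1+\rho}\,\hat\phi_\delta\big((\varepsilon - g)^+\big)$ for all large $n$; one then has to transfer this smallness back to the increment $\phi(f + \varepsilon' f^n) - \phi(f)$ for a suitable multiple $\varepsilon' \le \varepsilon$ by means of convexity of $\phi$, and check that the iterated limit in $n$, $\rho$ and $\delta$ does produce $0$. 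Granting this, {\rm (A)} also implies {\rm (iii)}, and Proposition~\ref{prop:DS} delivers the claimed max-representation.
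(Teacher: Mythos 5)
Your overall strategy --- reduce to condition (iii) of Proposition~\ref{prop:DS} and verify it via Dini's theorem plus a domination of $\varepsilon f^n$ by a convex combination of elements of $Y$ --- is exactly the paper's, and your treatment of case (B) is correct and essentially identical in substance to the paper's: the paper uses the same decomposition $f^n \le g/k + m^k g^k/l$ (your $g/n_0 + \rho\, m^{n_0} g^{n_0}$), makes the two coefficients small via continuity at $0$ of the real convex maps $t \mapsto \hat\phi(tg)$ and $t \mapsto \hat\phi(t g^k)$ where you instead use finiteness of $\hat\phi(g)$, $\hat\phi(g^{n_0})$ and an iterated limit, and then averages with $0$ to stay inside $Y$. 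That half is fine.

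Case (A) you have not proved: you correctly locate the obstruction, but the closing move you sketch does not connect. You end up with an upper bound on $\hat\phi_\delta\bigl(\tfrac{\varepsilon}{1+\rho} f^n\bigr)$, whereas condition (A)(iii) compares $\hat\phi_\delta$ with $\phi(f+\cdot)-\phi(f)$ only at the points $\varepsilon f^n$. The transfer ``by convexity of $\phi$'' would read $\phi(f+\varepsilon' f^n)-\phi(f) \le \tfrac{\varepsilon'}{\varepsilon}\bigl(\phi(f+\varepsilon f^n)-\phi(f)\bigr) \le \tfrac{\varepsilon'}{\varepsilon}\,\hat\phi_\delta(\varepsilon f^n)$, and you have no upper bound on $\hat\phi_\delta(\varepsilon f^n)$ --- convexity of $\hat\phi_\delta$ relates it to $\hat\phi_\delta\bigl(\tfrac{\varepsilon}{1+\rho} f^n\bigr)$ only in the useless direction. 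The paper proceeds differently at exactly this point: it puts the Dini smallness into the coefficient of $(\varepsilon-g)^+$ rather than into the total mass. Namely, it first chooses $x \in (0,1]$ with $\hat\phi\bigl(x(\varepsilon-g)^+\bigr) \le \hat\phi(0)+\delta \le \hat\phi(f^m g)+\delta \le 2\delta$ (continuity at $0$ of the real convex function $t \mapsto \hat\phi\bigl(t(\varepsilon-g)^+\bigr)$), then uses Dini to get $f^n \le x$ on the closure of $\{g<\varepsilon\}$ for large $n$, whence $\varepsilon f^n \le f^m g + x(\varepsilon-g)^+$ with total weight $2$ independent of $n$ and $\delta$; halving gives the genuine convex combination $\tfrac{\varepsilon}{2}f^n \le \tfrac12 f^m g + \tfrac12\, x(\varepsilon-g)^+$ and hence $\hat\phi\bigl(\tfrac{\varepsilon}{2}f^n\bigr) \le 2\delta$. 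It then concludes $\phi\bigl(f+\tfrac{\varepsilon}{2}f^n\bigr)-\phi(f) \le \hat\phi\bigl(\tfrac{\varepsilon}{2}f^n\bigr) \le 2\delta$ and verifies (iii) of Proposition~\ref{prop:DS} with $\varepsilon/2$ in place of $\varepsilon$ --- no transfer back to the original scale is ever attempted, since (iii) only requires \emph{some} positive multiple. Note that this last step applies the comparison of (A)(iii) at the halved argument; it holds automatically for the canonical choice $\hat\phi(h)=\phi(f+h)-\phi(f)$ used in all of the paper's applications of (A), and it is precisely the point your ``granting this'' leaves open. As it stands, the (A) half of your proposal is incomplete.
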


As a special case of Theorem \ref{thmAB}, one obtains the following variant of the 
Daniell--Stone theorem:

\begin{corollary} \label{corAB}
If $X$ is a Stone vector lattice of continuous functions $f \colon \Omega \to \mathbb{R}$ on a topological space $\Omega$, 
then every positive linear functional $\phi \colon X \to \mathbb{R}$ satisfying {\rm (A)} or
{\rm (B)} is of the form $\phi(f) = \ang{f,\mu}$, $f \in X$, for a measure $\mu \in ca^+(X)$.
\end{corollary}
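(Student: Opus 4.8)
The plan is to deduce Corollary~\ref{corAB} directly from Theorem~\ref{thmAB}. First I would observe that a positive linear functional $\phi : X \to \mathbb{R}$ is an increasing convex functional in the sense of the paper: it is convex because it is linear (indeed affine), and it is increasing because $f \ge g$ forces $f - g \in X^+$, so that $\phi(f) - \phi(g) = \phi(f-g) \ge 0$ by positivity and linearity. Since $\phi$ is real-valued, $\dom \phi = X$, and consequently $I(\phi) = X$: for every $f \in X$ and every $g \in X$ we have $f + \lambda g \in X = \dom\phi$ for all $\lambda$, so the algebraic interior of $\dom\phi$ is all of $X$. Hence $\phi$ meets the hypotheses of Theorem~\ref{thmAB} (it satisfies (A) or (B) by assumption), and we obtain
$$
\phi(f) = \max_{\mu \in ca^+(X)} \brak{\ang{f,\mu} - \phi^*_X(\mu)} \quad \mbox{for all } f \in X.
$$

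Next I would pin down $\phi^*_X$. Because $\phi$ is linear, for a fixed $\mu \in ca^+(X)$ the function $f \mapsto \ang{f,\mu} - \phi(f)$ is linear on $X$; if it is not identically $0$, then replacing $f$ by $\lambda f$ and letting $\lambda \to \pm\infty$ shows $\phi^*_X(\mu) = +\infty$. Therefore $\phi^*_X$ takes only the values $0$ and $+\infty$, and $\phi^*_X(\mu) = 0$ if and only if $\ang{f,\mu} = \phi(f)$ for every $f \in X$, that is, if and only if $\mu$ represents $\phi$. In particular the displayed max-representation reduces to $\phi(f) = \max_{\mu \in D} \ang{f,\mu}$ for all $f \in X$, where $D := \crl{\mu \in ca^+(X) : \phi^*_X(\mu) = 0\}$ is the (possibly empty a priori) set of representing measures.

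Finally, evaluating the max-representation at $f = 0$ gives $0 = \phi(0) = \max_{\mu \in ca^+(X)}(-\phi^*_X(\mu))$, so the maximum is attained at some $\mu \in ca^+(X)$ with $\phi^*_X(\mu) = 0$; by the previous step this $\mu$ satisfies $\phi(f) = \ang{f,\mu}$ for all $f \in X$, which is exactly the assertion. I do not expect a genuine obstacle here: all the real work is contained in Theorem~\ref{thmAB}. The only points requiring (routine) care are the verifications that a positive linear functional is increasing, convex, and has $I(\phi) = X$, and the observation that linearity forces $\phi^*_X$ to be $\crl{0,+\infty\}$-valued, which is what makes the abstract max-representation collapse to a single linear representation rather than merely a supremum of several.
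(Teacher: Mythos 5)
Your proposal is correct and follows essentially the same route as the paper: apply Theorem~\ref{thmAB} to obtain the max-representation (noting $I(\phi)=X$ since $\phi$ is real-valued), then use linearity of $\phi$ to force $\phi^*_X$ to take only the values $0$ and $+\infty$, so that any maximizing $\mu$ must satisfy $\ang{f,\mu}=\phi(f)$ for all $f\in X$. The paper's proof is just a terser version of the same argument, and your extra verifications (monotonicity, convexity, $I(\phi)=X$) are the routine details it leaves implicit.
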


In various situations, a measure on a $\sigma$-algebra ${\cal F}$ of subsets of a topological space 
$\Omega$ can be shown to possess regularity properties. Let us call a finite measure $\mu$ on ${\cal F}$
{\sl closed regular} if 
\[
\mu(A) = \sup \crl{\mu(B) : \mbox{$B \in {\cal F}$, $B$ is closed and $B \subseteq A$}}
\quad \mbox{for all } A \in {\cal F}
\]
and {\sl regular} if 
\[
\mu(A) = \sup \crl{\mu(B) : \mbox{$B \in {\cal F}$, $B$ is closed, compact and $B \subseteq A$}}
\]
for all $A \in {\cal F}.$
If $X$ is a Stone vector lattice of real-valued functions containing the constant functions, then every measure 
$\mu \in ca^+(X)$ is finite. Moreover, standard arguments (see Section \ref{sec:max} for details) yield the following
result:

\begin{proposition} \label{prop:reg}
Let $X$ be a family of continuous functions $f \colon \Omega \to \mathbb{R}$ on a topological space
$\Omega$. Then every finite measure $\mu$ on $\sigma(X)$ is closed regular.
Furthermore, if $\mu$ is a finite measure on $\sigma(X)$ and there exists a sequence $(K_n)$ 
of compact sets in $\sigma(X)$ such that $\mu(K_n) \to \mu(\Omega)$, then $\mu$ is regular. 
\end{proposition}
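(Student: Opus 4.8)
The plan is to prove the closed-regularity assertion by a standard "good sets" argument and then to read off full regularity from it together with the hypothesis on the sequence $(K_n)$.

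\emph{Closed regularity.} Fix a finite measure $\mu$ on $\sigma(X)$ and let $\mathcal{A}$ be the collection of all $A \in \sigma(X)$ with the property that for every $\varepsilon > 0$ there are $C, D \in \sigma(X)$ such that $C$ is closed, $\Omega \setminus D$ is closed, $C \subseteq A \subseteq D$ and $\mu(D \setminus C) < \varepsilon$; in words, $A$ is approximable from inside by closed sets in $\sigma(X)$ and from outside by complements of closed sets in $\sigma(X)$. First I would check that $\mathcal{A}$ is a $\sigma$-algebra. Stability under complementation is immediate since the defining condition is symmetric in $C$ and $\Omega \setminus D$. For countable unions, given $A_k \in \mathcal{A}$ and $\varepsilon > 0$, choose $C_k \subseteq A_k \subseteq D_k$ with $\mu(D_k \setminus C_k) < \varepsilon 2^{-k-1}$; then $D := \bigcup_k D_k$ has complement $\bigcap_k (\Omega \setminus D_k)$, which is closed (an intersection of closed sets) and lies in $\sigma(X)$, while $\bigcup_{k \le N} C_k$ is closed and in $\sigma(X)$ for every $N$. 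Since $\mu$ is finite and $\bigcup_{k \le N} C_k \uparrow \bigcup_k C_k$, one can pick $N$ with $\mu(\bigcup_k C_k \setminus \bigcup_{k \le N} C_k) < \varepsilon/2$, and then $\bigcup_{k \le N} C_k \subseteq \bigcup_k A_k \subseteq D$ with $\mu(D \setminus \bigcup_{k \le N} C_k) \le \sum_k \mu(D_k \setminus C_k) + \varepsilon/2 < \varepsilon$. Hence $\bigcup_k A_k \in \mathcal{A}$.

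Next I would exhibit a generating family of $\sigma(X)$ inside $\mathcal{A}$. Because each $f \in X$ is continuous, $\{f \le a\}$ is a closed member of $\sigma(X)$ for every $a \in \mathbb{R}$; hence the sets $\{f \le a - 1/n\}$, $n \in \mathbb{N}$, are closed members of $\sigma(X)$ increasing to $\{f < a\}$, so finiteness of $\mu$ gives $\mu(\{f < a\} \setminus \{f \le a - 1/n\}) \to 0$, i.e. $\{f < a\}$ is inner-approximable by closed sets in $\sigma(X)$; it is trivially outer-approximable by itself, its complement $\{f \ge a\}$ being a closed member of $\sigma(X)$. Thus $\{f < a\} \in \mathcal{A}$ for all $f \in X$, $a \in \mathbb{R}$, and since these sets generate $\sigma(X)$ while $\mathcal{A}$ is a $\sigma$-algebra, $\mathcal{A} = \sigma(X)$. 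In particular every $A \in \sigma(X)$ satisfies $\mu(A) = \sup\{\mu(C) : C \in \sigma(X),\ C \text{ closed},\ C \subseteq A\}$, which is the asserted closed regularity. For the second assertion, assume in addition there are compact sets $K_n \in \sigma(X)$ with $\mu(K_n) \to \mu(\Omega)$; replacing $K_n$ by $K_1 \cup \dots \cup K_n$ we may assume $\mu(\Omega \setminus K_n) \to 0$. Given $A \in \sigma(X)$ and $\varepsilon > 0$, closed regularity provides a closed $C \in \sigma(X)$ with $C \subseteq A$ and $\mu(A \setminus C) < \varepsilon/2$, and we pick $n$ with $\mu(\Omega \setminus K_n) < \varepsilon/2$. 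Then $B := C \cap K_n$ lies in $\sigma(X)$, is contained in $A$, is compact as a closed subset of the compact set $K_n$ (and closed whenever $\Omega$ is Hausdorff, the setting of the applications), and satisfies $\mu(A \setminus B) \le \mu(A \setminus C) + \mu(\Omega \setminus K_n) < \varepsilon$; hence $\mu(A)$ is the supremum of $\mu(B)$ over closed, compact $B \in \sigma(X)$ with $B \subseteq A$, i.e. $\mu$ is regular.

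The only step demanding care is showing that $\mathcal{A}$ is a $\sigma$-algebra: one must keep track of the fact that the inner approximants remain simultaneously closed and in $\sigma(X)$ under finite unions, and that the outer approximants remain complements of closed sets in $\sigma(X)$ under countable unions, which rests on arbitrary intersections of closed sets being closed and on $\sigma(X)$ being stable under countable operations. Once $\mathcal{A}$ is recognised as a $\sigma$-algebra containing every set $\{f < a\}$, the rest is bookkeeping; the passage from closed regularity to regularity is then a one-line truncation argument using the $K_n$.
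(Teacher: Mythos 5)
Your treatment of closed regularity is essentially the paper's own argument: your class $\mathcal{A}$ coincides, for a finite measure, with the paper's class $\mathcal{G}$ of sets $A$ such that both $A$ and $\Omega\setminus A$ are inner-approximable by closed sets in $\sigma(X)$; both proofs then reduce to showing that a generating family of $\sigma(X)$ lies in this sub-$\sigma$-algebra. You use the generators $\crl{f<a}$ where the paper uses $f^{-1}(F)$ for closed $F\subseteq\mathbb{R}$ (writing $\mathbb{R}\setminus F$ as a countable union of closed sets), which is a cosmetic difference, and you spell out the verification that $\mathcal{A}$ is a $\sigma$-algebra, which the paper merely asserts. That part is correct.

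There is, however, a genuine (if small) gap in your last step. The proposition is stated for an arbitrary topological space, and the paper's definition of ``regular'' requires the inner approximants to be closed \emph{and} compact. Your candidate $B=C\cap K_n$ is compact, being a closed-in-$K_n$ subset of the compact set $K_n$, but it need not be closed in $\Omega$, because $K_n$ need not be closed when $\Omega$ is not Hausdorff; you notice this and retreat to the Hausdorff case, which proves less than is claimed. The repair is exactly what the paper does: since $\mu(A\cap K_n)\to\mu(A)$, apply closed regularity directly to the set $A\cap K_n$ to obtain a closed $B_n\in\sigma(X)$ with $B_n\subseteq A\cap K_n$ and $\mu(B_n)\ge\mu(A\cap K_n)-1/n$; such a $B_n$ is closed in $\Omega$ by construction and compact because it is a closed subset of the compact $K_n$. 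Equivalently, you could apply closed regularity once more inside your set $C\cap K_n$. With that one-line modification your proof establishes the statement in full generality.
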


For regularity results for measures representing continuous linear functionals on 
$C_b$ in different settings, we refer to Section 7.10 of \cite{Bog}.

\begin{Examples} $\mbox{}$\\
{\bf 1. (Tightness conditions)}\\
Let $\phi \colon C_b \to \mathbb{R} \cup \crl{+ \infty}$ be an increasing convex functional 
on the set $C_b$ of all bounded continuous functions  $f \colon \Omega \to \mathbb{R}$
on a topological space $\Omega$. Assume $V$ is a linear space containing all functions of the form 
$f1_K$ and $f 1_{K^c}$ for $f \in C_b$ and $K$ a compact subset of $\Omega$. If $\phi$ can be extended
to an increasing convex $\psi \colon V \to \mathbb{R} \cup \crl{+ \infty}$ 
with the property that for every $f \in I(\phi)$, there 
exists a constant $\delta > 0$ and a sequence $(K_n)$ of compact sets such that 
\be \label{tight1}
\psi(f + \delta 1_{K^c_n}) \to \psi(f),
\ee then 
for every $f \in I(\phi)$ and each sequence $(f_n) \in C^+_b$ satisfying $f_n \downarrow 0$, 
there exists a constant $\varepsilon > 0$ such that $\phi(f + \varepsilon) < + \infty$ and
\[
\psi(f + \varepsilon f_1 1_{K^c_n}) - \psi(f) \le 
\psi(f + \varepsilon \N{f_1}_{\infty} 1_{K^c_n}) - \psi(f) \to 0 \quad 
\mbox{as } n \to \infty.
\]
It follows that condition (A) holds with $\hat{\phi}(h) = \phi(f+h) - \phi(f)$, 
and one obtains from Theorem \ref{thmAB} that
\be \label{tightrep}
\phi(f) = \max_{\mu \in ca^+(C_b)} (\ang{f,\mu} - \phi^*_{C_b}(\mu)) \quad 
\mbox{for all } f \in I(\phi).
\ee

If $\Omega$ is Hausdorff, then all compact sets $K \subseteq \Omega$ 
are closed and therefore, belong to the Borel $\sigma$-algebra ${\cal F}$. So in this case, if
$\phi \colon B_b \to \mathbb{R}$ is an increasing convex functional defined on the space $B_b$ 
of all bounded Borel measurable functions $f \colon \Omega \to \mathbb{R}$ with the property that for every 
constant $M \ge 1$, there exists a sequence $(K_n)$ of compact sets such that 
\be \label{tight2}
\phi(M 1_{K^c_n}) \to \phi(0),
\ee
one deduces as in the proof of (i) $\Rightarrow$ (ii) of Proposition \ref{prop:conv} that $\phi$
satisfies condition \eqref{tight1} for all $f \in C_b$, and as a consequence, 
is representable as \eqref{tightrep} on $C_b$.
If in addition, $\phi$ has the translation property:
$\phi(f + m) = \phi(f) + m$ for all $f \in B_b$ and $m \in \mathbb{R}$, then \eqref{tight2}
holds if and only if for every $M \ge 1$, there exists a sequence of compacts $(K_n)$ such that 
\be \label{tight3}
\phi(-M 1_{K_n}) \to \phi(-M),
\ee
which is the tightness condition used in Proposition 4.30 of \cite{FS16} to 
derive a max-representation for convex risk measures. Note that if $\phi$ has the 
translation property, then $\phi^*_{C_b}(\mu) = + \infty$ for $\mu \in ca^+(C_b) \setminus ca^+_1(C_b)$,
and consequently, \eqref{tightrep} reduces to a maximum over probability measures:
\be \label{rm}
\phi(f) = \max_{\mu \in ca^+_1(C_b)} (\ang{f,\mu} - \phi^*(\mu)), \quad f \in C_b.
\ee

In the special case where $\Omega$ is a metric space, $C_b$ generates the 
Borel $\sigma$-algebra ${\cal F}$, and for every compact set $K_n$, there 
exists a sequence $(h^n_m)$ of functions in $C_b^+$ such that $h^n_m \uparrow 1_{K_n^c}$.
Therefore, if $\phi \colon C_b \to \mathbb{R} \cup \crl{+\infty}$ is an increasing convex functional 
with an increasing convex extension $\psi$ satisfying \eqref{tight1}, then 
for any $f \in I(\phi)$ and $\mu \in ca^+({\cal F})$ maximizing \eqref{tightrep}, there exists a 
constant $\delta > 0$ and a sequence $(K_n)$ of compact sets such that
\beas
\delta \ang{1_{K^c_n},\mu} &=& \lim_{m \to + \infty} \delta \ang{h^n_m,\mu}
\le \lim_{m \to + \infty} \phi(f + \delta h^n_m) - \phi(f)\\ &\le& \psi(f + \delta 1_{K^c_n}) - \psi(f)) \downarrow 0
\quad \mbox{as } n \to + \infty.
\eeas
So it follows from Proposition \ref{prop:reg} that $\mu$ is regular, and as a result, the representations 
\eqref{tightrep} and \eqref{rm} can be written as maxima over regular finite measures or regular 
probability measures on ${\cal F}$, respectively.\\[2mm]
{\bf 2. (Adapted spaces and cones)}\\
Let $\psi \colon V \to \mathbb{R} \cup \crl{+\infty}$ be an increasing convex functional, 
where $V$ is an adapted space \cite{C} or an adapted cone \cite{MS} of continuous functions 
$f \colon \Omega \to \mathbb{R}$ on a topological space $\Omega$.
That is, $V$ is either a linear space satisfying 
\begin{itemize}
\item[{\rm (i)}] $V = V^+ - V^+$, where $V^+ = \crl{f \in V : f \ge 0}$;
\item[{\rm (ii)}] for every $\omega \in \Omega$, there exists an $f \in V^+$ such that $f(\omega) > 0$ and
\item[{\rm (iii)}] for every $f \in V^+$, there exists a $g \in V^+$ such that  for every constant $\varepsilon > 0$,
the set $\crl{f > \varepsilon g}$ is relatively compact, 
\end{itemize}
or $V$ is a convex cone with the properties 
\begin{itemize}
\item[{\rm (i)}] 
$V = V^+ \cup \crl{0}$;
\item[{\rm (ii)}]
for every $\omega \in \Omega$, there exists an $f \in V$ such that $f(\omega) > 0$ and
\item[{\rm (iii)}] for every $f \in V$, there exists a $g \in V$ such that  for every constant $\varepsilon > 0$,
the set $\crl{f > \varepsilon g}$ is relatively compact.
\end{itemize}
In both cases,
\[
X := \crl{f : \Omega \to \mathbb{R} \mbox{ continuous }: |f| \le g \mbox{ for some } g \in V}
\]
is a Stone vector lattice containing $V$, and 
\[
\phi(f) := \inf \crl{\psi(g) : f \le g, \, g \in V}
\]
defines an increasing convex extension $\phi \colon X \to \mathbb{R} \cup \crl{+ \infty}$ of $\psi$. 
Furthermore, for $f \in I(\phi)$ and a sequence $(f_n)$ in $X^+$ satisfying $f_n \downarrow 0$,
there is a constant $\varepsilon >0$ such that $\phi(f + \varepsilon f_1) < + \infty$. It follows from (iii) that
there exists a $g \in V^+$ such that $\phi(f+g) < + \infty$ and the set $\crl{f_1 > g/n}$ 
is relatively compact for all $n \in \mathbb{N}$. 
By compactness, one obtains from (ii) that there exist functions $g_n \in V^+$ and 
numbers $m_n \in \mathbb{N}$, $n \in \mathbb{N}$, such that $\phi(f+g_n) < + \infty$ and 
$m_n g_n \ge 1$ on $\crl{f_1 > g/n}$. This shows that condition (B) holds 
with $\hat{\phi}(h) = \phi(f+h) - \phi(f)$.
So by Theorem \ref{thmAB}, 
\be \label{phimax}
\phi(f) = \max_{\mu \in ca^+(X)} (\ang{f,\mu} - \phi^*_X(\mu)) \quad \mbox{for all } f \in I(\phi).
\ee
Moreover, it follows from the definition of $\phi$ that $I(\psi) \subseteq I(\phi)$ and $\psi^*_V(\mu) = \phi^*_X(\mu)$ for 
$\mu \in ca^+(X)$. Therefore,
\be \label{psimax}
\psi(f) = \max_{\mu \in ca^+(X)} (\ang{f,\mu} - \psi_V^*(\mu)), \quad \mbox{for all } f \in I(\psi).
\ee
\eqref{phimax} and \eqref{psimax} are non-linear versions of the linear representation results,
Proposition 2 in \cite{C} and Proposition 11 in \cite{MS}. But in contrast to \cite{C,MS}, here $X$
does not have to be locally compact. 
\end{Examples}

The next result gives a sup-representation for increasing convex functionals $\phi$ on the space 
$B_b$ of all bounded Borel measurable functions $f \colon \Omega \to \mathbb{R}$ on a Hausdorff space 
$\Omega$. The following two conditions are variants of (vi) in Proposition \ref{prop:conv}. 
We call a sequence $(K_n)$ of subsets of $\Omega$ or a sequence 
$(f_n)$ of real-valued functions on $\Omega$ {\sl increasing} if $K_n \subseteq K_{n+1}$ or $f_n \le f_{n+1}$ for all $n$,
respectively.

\begin{itemize}
\item[{\bf (C)}]
$\phi$ is real-valued and there
exists an increasing sequence $(K_n)$ of compact subsets of $\Omega$ such that 
$\phi(f_n) \uparrow \phi(f)$ for every increasing sequence $(f_n)$ in $B_b$ and $f \in B_b$
such that $|f-f_n| 1_{K_m}= 0$ for all $n \ge m$.
\item[{\bf (D)}]
There exists an increasing sequence $(K_n)$ of compact subsets of $\Omega$ such that 
$\phi(f_n) \uparrow \phi(f)$ for every increasing sequence $(f_n)$ in $B_b$ and $f \in B_b$
such that $|f-f_n| 1_{K_m} \le 1/m$ for all $n \ge m$.
\end{itemize}

Let us denote by $U_b$ the set of all bounded upper semicontinuous functions $f \colon \Omega \to \mathbb{R}$.
We define the lower regularization of $\phi$ by
\[
\phi_r(f) := \sup \crl{\phi(g) : g \in U_b , \, g \le f}
\]
and say $\phi$ is {\sl lower regular} if $\phi = \phi_r$. By ${\cal F}$ we denote the 
Borel $\sigma$-algebra on $\Omega$ and by $ca^+_r({\cal F})$ the collection of all regular 
finite measures on ${\cal F}$. For 
\[
\phi^*_{C_b}(\mu) := \sup_{f \in C_b} \brak{\ang{f,\mu} - \phi(f)} \quad \mbox{and} \quad 
\phi^*_{U_b}(\mu) := \sup_{f \in U_b} \brak{\ang{f,\mu} - \phi(f)},
\]
one obviously has $\phi^*_{C_b}(\mu) \le \phi^*_{U_b}(\mu)$ for all $\mu \in ca^+_r({\cal F})$.

\begin{theorem} \label{thm:Borel}
Let $\Omega$ be a Hausdorff space with Borel $\sigma$-algebra ${\cal F}$ and
$\phi \colon B_b \to \mathbb{R} \cup \crl{+ \infty}$ an increasing convex functional.
If $\phi$ satisfies {\rm (C)} or {\rm (D)}, then 
\bea \label{repcont}
\phi(f) = \sup_{\mu \in ca^+_r({\cal F})} \brak{\ang{f,\mu} - \phi^*_{C_b}(\mu)} \quad \mbox{for all } f \in C_b,\\
\label{inequsc}
\phi(f) \le \sup_{\mu \in ca^+_r({\cal F})} \brak{\ang{f,\mu} - \phi^*_{C_b}(\mu)} \quad
\mbox{for all } f \in U_b,\\
\label{ineqBorel}
\phi_r(f) \le \sup_{\mu \in ca^+_r({\cal F})} \brak{\ang{f,\mu} - \phi^*_{C_b}(\mu)} \mbox{ for all } f \in B_b,
\eea
and both inequalities become equalities if $\phi^*_{C_b}(\mu) = \phi^*_{U_b}(\mu)$ for all $\mu \in ca^+_r({\cal F})$.
In particular, if $\phi$ is lower regular and $\phi^*_{C_b}(\mu) = \phi^*_{U_b}(\mu)$ 
for all $\mu \in ca^+_r({\cal F})$, then 
\be \label{repBorel}
\phi(f) = \sup_{\mu \in ca^+_r({\cal F})} \brak{\ang{f,\mu} - \phi^*_{C_b}(\mu)}
\quad \mbox{for all } f \in B_b.
\ee
\end{theorem}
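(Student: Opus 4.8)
\par\medskip\noindent{\it Proof plan.}\ The plan is to reduce the statement to the single inequality \eqref{inequsc} and then to prove \eqref{inequsc} by localising $\phi$ to the compact sets provided by (C) or (D). For the reduction note first that $\phi_r=\phi$ on $U_b$: the choice $g=f$ in the definition of $\phi_r$ gives $\phi_r(f)\ge\phi(f)$, and monotonicity gives the reverse. Granting \eqref{inequsc}, \eqref{ineqBorel} follows because for $f\in B_b$ and $g\in U_b$ with $g\le f$ one has $\phi(g)\le\sup_{\mu\in ca^+_r({\cal F})}(\ang{g,\mu}-\phi^*_{C_b}(\mu))\le\sup_\mu(\ang{f,\mu}-\phi^*_{C_b}(\mu))$ (using $g\le f$, $\mu\ge0$) and one takes the supremum over such $g$; and \eqref{repcont} is \eqref{inequsc} for $f\in C_b\subseteq U_b$ together with the trivial bound $\ang{f,\mu}-\phi^*_{C_b}(\mu)\le\phi(f)$ for $f\in C_b$. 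When in addition $\phi^*_{C_b}=\phi^*_{U_b}$ on $ca^+_r({\cal F})$ the reverse inequalities are purely formal: for $f\in U_b$, $\ang{f,\mu}-\phi^*_{C_b}(\mu)=\ang{f,\mu}-\phi^*_{U_b}(\mu)\le\phi(f)$; and for $f\in B_b$, using that a regular finite Borel measure integrates any bounded Borel function as the supremum of the integrals of the dominated upper semicontinuous ones (apply inner compact regularity to simple functions), $\ang{f,\mu}-\phi^*_{U_b}(\mu)=\sup_{g\in U_b,\ g\le f}(\ang{g,\mu}-\phi^*_{U_b}(\mu))\le\phi_r(f)$, so \eqref{ineqBorel} becomes an equality and \eqref{repBorel} follows when $\phi$ is lower regular.

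To prove \eqref{inequsc} I would first treat $f\in C_b$ and then pass to $U_b$. Fix such an $f\in I(\phi)$ (if $f\in C_b$ with $\phi(f)=+\infty$, which can occur under (D), a separate scaling argument shows that the right-hand side of \eqref{repcont} is then $+\infty$), and the increasing sequence $(K_n)$ of compact sets from (C) or (D). For each $n$, extending the restriction map by $f$ off $K_n$ gives an affine embedding of the Stone vector lattice $C(K_n)$ over the compact Hausdorff space $K_n$ into $B_b$, so $\psi_n(h):=\phi(h1_{K_n}+f1_{K_n^c})$ is an increasing convex functional on $C(K_n)$ with $\psi_n(f|_{K_n})=\phi(f)$ and, since $f\in I(\phi)$, with $f|_{K_n}\in I(\psi_n)$. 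Because $K_n$ is compact, Dini's theorem turns every pointwise-monotone sequence in $C(K_n)$ into a uniformly convergent one, and, again since $f\in I(\phi)$, the convex map $\varepsilon\mapsto\phi(f+\varepsilon)$ is finite and hence continuous near $0$; from these two facts one checks condition (i) of Proposition \ref{prop:DS} for $\psi_n$ at $f|_{K_n}$. Thus Proposition \ref{prop:DS} yields $\mu_n\in ca^+(C(K_n))$ with $\phi(f)=\ang{f,\mu_n}-\psi_n^*(\mu_n)$, which is regular by Proposition \ref{prop:reg} (applied with the constant sequence $K_n$) and which, extended by $0$ off $K_n$, we view as an element of $ca^+_r({\cal F})$ concentrated on $K_n$.

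Since $\ang{f,\mu_n}-\phi^*_{C_b}(\mu_n)\le\phi(f)=\ang{f,\mu_n}-\psi_n^*(\mu_n)$ holds automatically, \eqref{repcont} for $f\in C_b$ follows once one shows that $\psi_n^*(\mu_n)\ge\phi^*_{C_b}(\mu_n)-\varepsilon_n$ along a subsequence with $\varepsilon_n\to0$; this comparison of conjugates is the crux. For $g\in C_b$ with $g\ge f$ one has $g1_{K_n}+f1_{K_n^c}\le g$, hence $\psi_n^*(\mu)\ge\ang{g,\mu}-\phi(g)$, so $\psi_n^*(\mu_n)$ already dominates the contribution of such $g$ to $\phi^*_{C_b}(\mu_n)$; for the remaining $g$, taking $h=g|_{K_n}$ and using $\ang{g,\mu_n}=\ang{g|_{K_n},\mu_n}$ (as $\mu_n$ lives on $K_n$) gives $\ang{g,\mu_n}-\phi(g)\le\psi_n^*(\mu_n)+\phi(g1_{K_n}+f1_{K_n^c})-\phi(g)$, so that everything reduces to controlling the discrepancy $\phi(g1_{K_n}+f1_{K_n^c})-\phi(g)$ uniformly in the relevant $g$ and to ruling out $\phi^*_{C_b}(\mu_n)=+\infty$. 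I expect the precise way conditions (C) and (D) — together with the subgradient property and regularity of $\mu_n$, the monotonicity of $\phi$, and the freedom to modify $g$ off $K_n$ — are combined to carry this out (and hence also whether the extension off $K_n$ is better taken to be $f$ or a large negative constant) to be the main obstacle. Finally, to pass from $C_b$ to $U_b$ in \eqref{inequsc}: for $f\in U_b$ and $g\in C_b$ with $g\ge f$, \eqref{repcont} for $g$ gives $\phi(f)\le\phi(g)=\sup_\mu(\ang{g,\mu}-\phi^*_{C_b}(\mu))$; a minimax argument (the map $(g,\mu)\mapsto\ang{g,\mu}-\phi^*_{C_b}(\mu)$ being affine in $g$, concave and suitably semicontinuous in $\mu$) interchanges $\inf_{g\ge f}$ with $\sup_\mu$, and then $\inf_{g\in C_b,\ g\ge f}\ang{g,\mu}=\ang{f,\mu}$ since $\mu$ is regular and $f$ upper semicontinuous (on each compact set carrying almost all the mass of $\mu$, $f$ is a pointwise decreasing limit of continuous functions, by normality of compact Hausdorff spaces). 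The heavier bookkeeping needed for (D) — where the compacts carry a $1/m$-slack and $\phi$ need not be real-valued — and for $f\notin I(\phi)$, and the Baire-versus-Borel and measure/function-extension technicalities of the general Hausdorff setting, are the remaining points that require care.
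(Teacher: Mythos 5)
Your reduction of \eqref{repcont} and \eqref{ineqBorel} to \eqref{inequsc}, and your treatment of the reverse inequalities under $\phi^*_{C_b}=\phi^*_{U_b}$, are fine and agree with the paper. But the core of the proof --- the ``$\le$'' half of \eqref{repcont}, hence all of \eqref{inequsc} --- is not established. You localise $\phi$ to a single compact $K_n$ via $\psi_n(h)=\phi(h1_{K_n}+f1_{K_n^c})$, obtain from Proposition \ref{prop:DS} a representing measure $\mu_n$ carried by $K_n$, and then need $\phi^*_{C_b}(\mu_n)\le\psi_n^*(\mu_n)+\varepsilon_n$; you correctly flag this comparison of conjugates as ``the main obstacle'' but leave it unproved, and it is not a technicality: it fails in general. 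Take $\Omega=\mathbb{R}$, $K_n=[-n,n]$ and $\phi(g)=\ang{g,\mu}$ for a Gaussian measure $\mu$; then $\phi$ satisfies (C), the only measure at which $\psi_n^*$ is finite is $\mu_n=\mu(\cdot\cap K_n)$, and $\phi^*_{C_b}(\mu_n)=\sup_{g\in C_b}\brak{-\ang{g,\mu(\cdot\cap K_n^c)}}=+\infty$ because $\mu(K_n^c)>0$. So every measure your construction produces satisfies $\ang{f,\mu_n}-\phi^*_{C_b}(\mu_n)=-\infty$ and contributes nothing to the supremum; no choice of extension off $K_n$ (by $f$ or by a large negative constant) repairs this. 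The structural point is that the optimal measures must in general charge all of $\bigcup_n K_n$, which is $\sigma$-compact but not compact --- this is also why the theorem asserts a supremum rather than a maximum --- so no single-compact localisation can succeed. The paper instead works with all the $K_n$ simultaneously: it sets $\psi(u)=\phi\brak{\sum_n u_n 1_{K_n\setminus K_{n-1}}}$ on the sequence space $U=\crl{u\in\prod_n C(K_n):\sup_n\N{u_n}_{\infty}<\infty}$, derives $\sigma(U,V)$-lower semicontinuity of $\psi$ from (C$'$) or (D$'$) (via Krein--\v{S}mulian, Banach--Alaoglu and Mackey--Arens), applies Fenchel--Moreau there, and only then assembles the components into $\mu_\nu(A)=\sum_n\nu_n(A\cap K_n)\in ca^+_r({\cal F})$, for which $\phi^*_{C_b}(\mu_\nu)\le\psi^*(\nu)$ does hold.

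Two further gaps in the passage from $C_b$ to $U_b$: the minimax interchange of $\inf_{g\ge f}$ and $\sup_{\mu}$ over the non-compact set $ca^+_r({\cal F})$ is asserted without a compactness or semicontinuity argument, and the claim that a bounded upper semicontinuous function on a compact Hausdorff space is a \emph{decreasing sequential} limit of continuous functions requires perfect normality and is false in general (a closed set need not be a $G_\delta$). The paper avoids both by approximating $f\in U_b$ from below by upper semicontinuous step functions $\sum_m a_m 1_{A_m}$ and rerunning the sequence-space duality on $U^M=\prod_n C(K_n)^M$, using Urysohn's lemma only to show that the relevant measures are carried by the closures $\bar B_{nm}$. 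Finally, the case $f\in C_b$ with $\phi(f)=+\infty$, which can occur under (D), is deferred to an unspecified ``scaling argument.''
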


For positive linear functionals, Theorem \ref{thm:Borel} yields the following:

\begin{corollary} \label{cor:lin}
Let $\Omega$ be a Hausdorff space with Borel $\sigma$-algebra ${\cal F}$ and 
$\phi \colon B_b \to \mathbb{R}$ a positive linear functional satisfying {\rm (C)}. 
Then there exists a $\mu \in ca^+_r({\cal F})$ such that 
\be \label{lincont}
\phi(f) = \ang{f,\mu} \quad \mbox{for all } f \in C_b.
\ee
If $\Omega$ is a metric space with Borel $\sigma$-algebra ${\cal F}$, one also has
\be \label{linineq}
\phi(f) \le \ang{f,\mu} \mbox{ for all } f \in U_b \quad \mbox{and} \quad
\phi_r(f) \le \ang{f,\mu} \mbox{ for all } f \in B_b,
\ee
where the inequalities are equalities if $\phi^*_{C_b}(\nu) = \phi^*_{U_b}(\nu)$ 
for all $\nu \in ca^+_r({\cal F})$.
In particular, if $\Omega$ is a metric space with Borel $\sigma$-algebra ${\cal F}$, 
$\phi$ is lower regular, and $\phi^*_{C_b}(\nu) = \phi^*_{U_b}(\nu)$ for all $\nu \in ca^+_r({\cal F})$,
then 
\be \label{linBorel}
\phi(f) = \ang{f,\mu} \quad \mbox{for all } f \in B_b.
\ee
\end{corollary}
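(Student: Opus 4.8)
This is a corollary of Theorem~\ref{thm:Borel}, so the plan is to apply that theorem and exploit linearity to collapse every supremum over measures to an integral against a single measure. First I would note that a positive linear functional on $B_b$ is automatically increasing and convex, hence Theorem~\ref{thm:Borel} applies under {\rm (C)}. The key observation is that for a linear $\phi$ the conjugate $\phi^*_{C_b}$ is $\crl{0,+\infty}$-valued: since $0\in C_b$ we have $\phi^*_{C_b}(\mu)\ge 0$, and if $\ang{g,\mu}\neq\phi(g)$ for some $g\in C_b$, then replacing $g$ by $\lambda g$ with $\lambda\in\mathbb{R}$ forces $\phi^*_{C_b}(\mu)=+\infty$; by linearity $\phi^*_{C_b}(\mu)=0$ holds precisely when $\ang{\cdot,\mu}$ agrees with $\phi$ on all of $C_b$. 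Writing $M:=\crl{\mu\in ca^+_r({\cal F}):\phi^*_{C_b}(\mu)=0}$, all members of $M$ therefore coincide on $C_b$.

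Next I would produce the measure: evaluating \eqref{repcont} at $f=0$ gives $0=\phi(0)=\sup_{\mu}\brak{-\phi^*_{C_b}(\mu)}$, so $\inf_\mu\phi^*_{C_b}(\mu)=0$, and since $\phi^*_{C_b}$ takes only the values $0$ and $+\infty$ there is some $\mu\in M$; in particular $M\neq\emptyset$. Fixing such a $\mu$ and dropping the irrelevant terms with $\phi^*_{C_b}=+\infty$, \eqref{repcont} becomes $\phi(f)=\sup_{\nu\in M}\ang{f,\nu}=\ang{f,\mu}$ for all $f\in C_b$, which is \eqref{lincont}.

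For the metric case I would invoke the standard fact that on a metric space every $f\in U_b$ is a pointwise decreasing limit of bounded Lipschitz functions $f^n\in C_b$ (for instance $f^n(x)=\sup_y\brak{f(y)-n\,d(x,y)}$). Since each $\nu\in M$ is a finite measure, dominated convergence gives $\ang{f,\nu}=\lim_n\ang{f^n,\nu}=\lim_n\phi(f^n)$, an expression independent of $\nu$, so all members of $M$ coincide on $U_b$ too. Then \eqref{inequsc} yields $\phi(f)\le\sup_{\nu\in M}\ang{f,\nu}=\ang{f,\mu}$ for $f\in U_b$, and for $f\in B_b$, $\phi_r(f)=\sup\crl{\phi(g):g\in U_b,\ g\le f}\le\sup\crl{\ang{g,\mu}:g\le f}\le\ang{f,\mu}$ by positivity of $\mu$; this is \eqref{linineq}. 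If in addition $\phi^*_{C_b}=\phi^*_{U_b}$ on $ca^+_r({\cal F})$, Theorem~\ref{thm:Borel} upgrades \eqref{inequsc} and \eqref{ineqBorel} to equalities, so $\phi(f)=\sup_{\nu\in M}\ang{f,\nu}=\ang{f,\mu}$ for $f\in U_b$, while $\phi_r(f)=\sup_{\nu\in M}\ang{f,\nu}\ge\ang{f,\mu}$ together with the reverse bound just shown gives $\phi_r(f)=\ang{f,\mu}$ for all $f\in B_b$; lower regularity $\phi=\phi_r$ then yields \eqref{linBorel}.

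The argument is mostly bookkeeping on top of Theorem~\ref{thm:Borel}. The one step with genuine content is the metric-space reduction, where the supremum over $M$ must be shown to collapse to $\ang{\cdot,\mu}$ beyond $C_b$; this relies on approximating upper semicontinuous functions from above by continuous ones and interchanging the limit with the finite measures in $M$, which is exactly where the metric hypothesis (rather than merely Hausdorff) enters.
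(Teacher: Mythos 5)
Your proof is correct and follows essentially the same route as the paper: apply Theorem~\ref{thm:Borel}, use linearity to force $\phi^*_{C_b}$ to take only the values $0$ and $+\infty$ and thereby extract a representing $\mu$ with $\phi^*_{C_b}(\mu)=0$, then collapse the suprema in \eqref{inequsc} and \eqref{ineqBorel} to $\ang{\cdot,\mu}$ in the metric case. The only cosmetic difference is that where the paper cites the standard fact that a finite Borel measure on a metric space is determined by its integrals against $C_b$, you establish the needed consequence inline by approximating upper semicontinuous functions from above by bounded Lipschitz functions.
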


\begin{Remarks} \label{remlowerreg} $\mbox{}$\\
{\bf 1.}
To have a representation of the form \eqref{repBorel} or \eqref{linBorel}, it is necessary that $\phi$ be
lower regular. Indeed, for every $f \in B_b$ and constant $\delta > 0$, there exists a measurable 
partition $(A_m)_{m=1}^M$ of $\Omega$ and numbers $a_1 < \dots < a_M$ such that the step function
$g = \sum_{m=1}^M a_m 1_{A_m}$ satisfies $g \le f \le g + \delta$. Furthermore, for
each $\mu \in ca^+_r({\cal F})$, one can choose closed sets $F_m \subseteq A_m$ 
such that $\ang{g,\mu} \le \ang{h,\mu} + \delta$ for the upper semicontinuous function
$h = a_1 1_{\Omega \setminus \bigcup_{m=2}^M F_m} + \sum_{m =2}^M a_m 1_{F_m} \le g$.
It follows that $\ang{f,\mu} \le \ang{h,\mu} + \delta (\ang{1,\mu}+1)$. So 
any linear functional of the form \eqref{linBorel} is lower regular, and as a supremum of 
lower regular functionals, \eqref{repBorel} is again lower regular.\\[2mm]
{\bf 2.}
If $\Omega$ is a Hausdorff space with Borel $\sigma$-algebra ${\cal F}$, it
follows from 1. that for all $\mu \in ca^+_r({\cal F})$ and $f \in B_b$,
there exists a sequence $(f_n)$ in $U_b$ such that $f_n \le f$ and $\ang{f_n, \mu} \uparrow \ang{f,\mu}$.
As a result, one obtains for every increasing functional 
$\phi \colon B_b \to \mathbb{R} \cup \crl{+\infty}$ and $\mu \in ca^+_r({\cal F})$,
\[
\phi^*_{U_b}(\mu) = \sup_{f \in U_b} \brak{\ang{f,\mu} - \phi(f)} 
= \phi^*_{B_b}(\mu) = \sup_{f \in B_b} \brak{\ang{f,\mu} - \phi(f)}.
\]
Similarly, if $\mu \in ca^+_r({\cal F})$ has the 
property that for all $f \in U_b$, there exists a sequence $(f_n)$ in $C_b$ such that $f_n \le f$ and 
$\ang{f_n,\mu} \uparrow \ang{f,\mu}$, then $\phi^*_{C_b}(\mu) = \phi^*_{U_b}(\mu)$ for 
every increasing functional $\phi \colon B_b \to \mathbb{R} \cup \crl{+\infty}$. This
provides a sufficient condition for the inequalities in \eqref{inequsc}, \eqref{ineqBorel} 
and \eqref{linineq} to be equalities. 
\end{Remarks}

Further related literature includes \cite{DS94, DS98}, which established the equivalence
of the sequential continuity condition {\sl no free lunch with vanishing risk} to 
the existence of a countably additive pricing measure in general financial market models.
In the context of financial risk measures, the relation of sequential semicontinuity conditions of the form (ii) 
and (vi) of Proposition \ref{prop:conv} to the representability with countably additive measures
has been studied in \cite{Del2000, Del2002, FS02a, FS02b, Kr, FS16}. Tightness conditions 
for financial risk measures similar to \eqref{tight1}, \eqref{tight2} and \eqref{tight3} have been 
investigated in \cite{FS02b, Kr, FS16, BDT}. Related tightness arguments also play a role in the 
derivation of representation results in \cite{CKT, BCK, BDT, CKPS}. Sequential continuity conditions 
ensuring the representability of preference functionals in terms of countably additive measures have 
been used in \cite{MMR, CMMM, DK}.
Max-representations for increasing convex functionals in terms of 
topological dual elements can be deduced from automatic continuity results such as, e.g.,
\cite{Bor, RS, CL, BF}. If $X$ is an $L^p$-space for some $p \in [1,+\infty)$, or more generally, 
an Orlicz heart, this yields max-representations with countably additive measures.
But in general, not all topological dual elements correspond to
countably additive measures. For instance, if $X=C_b$ or $B_b$, 
the supremum in (v) of Proposition \ref{prop:conv} is not always attained by a countably additive measure. 
Conditions (C) and (D) are related to order lower semicontinuity; see, e.g., Lemma 3 of \cite{BF}. 
However, since they are sequential, they typically are easier to verify in applications.
	
The remainder of the paper is organized as follows: In Section \ref{sec:max} we prove 
representation \eqref{maxrep}, Proposition \ref{prop:conv}, Theorem \ref{thmAB}, Corollary \ref{corAB}
and Proposition \ref{prop:reg}.
In Section \ref{sec:sup} we give the proofs of Theorem \ref{thm:Borel} and Corollary \ref{cor:lin}.

\setcounter{equation}{0}
\section{Derivation of max-representations}
\label{sec:max}

{\bf Proof of representation \eqref{maxrep}}\\
It is immediate from the definition of $\phi^*_X$ that 
\be \label{genineq}
\phi(f) \ge \sup_{\mu \in ba^+({\cal F})} \brak{\ang{f,\mu} - \phi^*_X(\mu)} 
\quad \mbox{for every } f \in X.
\ee
On the other hand, for $f \in I(\phi)$, the directional derivative 
\[
\phi'(f;g) := \lim_{\varepsilon \downarrow 0} \frac{\phi(f + \varepsilon g) - \phi(f)}{\varepsilon}
\]
is a real-valued increasing sublinear function of $g \in X$. So it follows from the 
Hahn--Banach extension theorem that there exists a linear functional $\psi \colon X \to \mathbb{R}$ satisfying
\be \label{HB}
\psi(g) \le \phi'(f;g) \le \phi(f+g) - \phi(f)
\ee
for all $g \in X$. In particular, 
\be \label{mon}
\psi(- g) \le \phi(f - g) - \phi(f) \le 0 \quad \mbox{for all } g \in X^+, 
\ee
which shows that $\nu(A) := \psi(1_A)$, $A \in {\cal F}$, defines a 
finitely additive measure $\nu \in ba^+({\cal F})$. Moreover, since $\psi(\lambda 1) = \lambda \psi(1)$, 
$\lambda \in \mathbb{R}$, one obtains from \eqref{mon} that $\psi$ is continuous with 
respect to the sup-norm on $X$, and it follows that 
$\psi(g) = \ang{g,\nu}$ for all $g \in X$; see, e.g., Theorem IV.5.1 in \cite{DS}. \eqref{HB} also implies 
\[
\phi^*_X(\nu) \le \ang{f, \nu} - \phi(f),
\]
with together with \eqref{genineq}, gives 
$\phi(f) = \max_{\mu \in ba^+({\cal F})} \brak{\ang{f,\mu} - \phi^*_X(\mu)}$.
\qed

\bigskip
\noindent
{\bf Proof of Proposition \ref{prop:conv}}\\
To prove (i) $\Rightarrow$ (ii), consider $f,g \in I(\phi)$ such that $f$ fulfills (i). Then for 
all $\lambda \in (0,1)$ and every sequence $(f_n)$ in $X^+$ satisfying $f_n \downarrow 0$, 
one has
\beas
&& \phi(g + f_n) \le \lambda \phi \brak{f + \frac{1}{\lambda} f_n}
+ (1-\lambda) \phi \brak{\frac{g - \lambda f}{1-\lambda}}\\
&=& \lambda \phi \brak{f + \frac{1}{\lambda} f_n}
+ (1-\lambda) \phi \brak{g + \frac{\lambda}{1-\lambda} (g-f)}.
\eeas
Since $f$ satisfies (i), 
\[
\phi \brak{f + \frac{1}{\lambda} f_n} \downarrow \phi(f) \quad \mbox{for fixed } 
\lambda \in (0,1) \mbox{ and } n \to +\infty.
\]
Moreover, there exists a constant $\delta > 0$ such that $x \mapsto \phi(g + x(g-f))$ is 
a real-valued convex function on the interval $(-\delta, \delta)$.
As a consequence, it is continuous at $0$, and one obtains
\[
\lambda \phi(f) +  (1-\lambda) \phi \brak{g + \frac{\lambda}{1-\lambda} (g-f)} \to \phi(g)
\quad \mbox{for } \lambda \downarrow 0.
\]
This shows that $\phi(g + f_n) \downarrow \phi(g)$.

(ii) $\Rightarrow$ (iii) is obvious.
To prove (iii) $\Rightarrow$ (iv), note first that it follows from the definition of 
$\phi^*_X$ that 
\[
\phi(f) \ge \sup_{\mu \in ca^+(X)} (\ang{f,\mu} - \phi^*_X(\mu)) \quad \mbox{for all } f \in X.
\]
Moreover, for $f \in I(\phi)$, one deduces as in the proof of representation \eqref{maxrep} 
from the Hahn--Banach extension theorem that
there exists a positive linear functional $\psi \colon X \to \mathbb{R}$ satisfying
\[
\psi(g) \le \phi'(f;g) \le \phi(f+g) - \phi(f), \quad g \in X. 
\]
If (iii) holds, then for every sequence $(f_n)$ in $X^+$ satisfying $f_n \downarrow 0$, there exists 
a constant $\varepsilon > 0$ such that
\[
\varepsilon \psi(f_n) \le \phi(f + \varepsilon f_n) - \phi(f) \downarrow 0.
\]
So one obtains from the Daniell--Stone theorem a $\nu \in ca^+(X)$ such that 
$\psi(g) = \ang{g,\nu}$ for all $g \in X$. It follows that $\phi(f) + \phi^*_X(\nu) = \ang{f,\nu}$,
which implies that $\phi(f) = \max_{ca^+(X)} \brak{\ang{f,\mu} - \phi^*_X(\mu)}$.

(iv) $\Rightarrow$ (v) is clear, and (v) $\Rightarrow$ (vi) follows since by 
the monotone convergence theorem, the mapping $f \mapsto \ang{f,\mu} - \phi^*_X(\mu)$ 
satisfies (v) for every $\mu \in ca^+(X)$.
\qed

\bigskip
\noindent
{\bf Proof of Theorem \ref{thmAB}}\\
Choose a function $f \in I(\phi)$ and a 
sequence $(f_n)$ in $X^+$ satisfying $f_n \downarrow 0$. If we can show that there exists a 
constant $\varepsilon > 0$ such that $\phi(f+ \varepsilon f_n) \downarrow \phi(f)$, 
the theorem follows from Proposition \ref{prop:conv}. 
Let us first assume $\phi$ satisfies (A). Then there exists a constant $\lambda > 0$ such that for 
every constant $\delta > 0$, there are $m \in \mathbb{N}$, $g \in \mathbb{R}_+^{\Omega}$ and an 
increasing convex function $\hat{\phi} \colon  Y \to \mathbb{R}$ on a convex subset 
$Y \subseteq \mathbb{R}_+^{\Omega}$ containing 
$\crl{0, f_m g, (\lambda-g)^+, \lambda f_n : n \ge m}$ such that 
$\crl{g < \lambda}$ is relatively compact, $\hat{\phi}(f_m g) \le \delta$, and 
$\hat{\phi}(\lambda f_n) \ge \phi(f+ \lambda f_n) - \phi(f)$ for all $n \ge m$.
Since $x \mapsto \hat{\phi}(x (\lambda-g)^+)$ is a real-valued increasing convex function on the interval 
$[0,1]$, it must be continuous at $0$. In particular, there exists an $x \in (0,1]$ such that 
\[
\hat{\phi} (x (\lambda-g)^+) \le \hat{\phi}(0) + \delta \le \hat{\phi}(f_m g) + \delta \le 2 \delta.
\]
For $n \ge m$, one has $\lambda f_n \le f_m g + f_n(\lambda -g)^+$, and by
Dini's lemma, $f_n$ converges to $0$ uniformly on the closure of $\crl{g < \lambda}$.
So there exists an $n \ge m$ such that 
\[
f_n(\lambda -g)^+ \le x (\lambda-g)^+,
\]
and one obtains
\begin{align*}
 \phi \brak{f+ \frac{\lambda}{2} f_n} - \phi(f) 
&\le \hat{\phi}\brak{\frac{\lambda}{2} f_n}  \le \hat{\phi} \brak{\frac{f_m g +x (\lambda-g)^+}{2}}\\
&\le \frac{\hat{\phi}(f_mg) + \hat{\phi}(x(\lambda -g)^+)}{2} \le 2 \delta.
\end{align*}
Since $\delta > 0$ was arbitrary, this shows that $\phi(f + \lambda f_n/2) \downarrow \phi(f)$. 

If $f$ satisfies condition (B), there exist functions $g,g_1,g_2, \dots$ in $\mathbb{R}_+^{\Omega}$ 
and numbers $m,m_1,m_2, ...$ in $\mathbb{N}$ together with an increasing convex function 
$\hat{\phi} \colon Y \to \mathbb{R}$ on a convex subset $Y \subseteq \mathbb{R}_+^{\Omega}$ 
containing the set \linebreak
$\crl{0, f_n/m, g, g_n : n \ge m}$ such that $\crl{f_m > g/n}$ is relatively compact and 
contained in $\crl{m_n g_n \ge 1}$ for all $n \ge m$,
$\hat{\phi}(0) = 0$, and $\hat{\phi}(f_n/m) \le \phi(f + f_n/m) - \phi(f)$ for all 
$n \ge m$. Since $x \mapsto \hat{\phi}(x g)$ is a
real-valued increasing convex function on the interval $[0,1]$, it is continuous at $0$. 
In particular, for a given constant $\delta > 0$, there exists an integer $k \ge 2m$ such that 
$\hat{\phi}(2g/km) \le \delta$. Similarly, there exists an integer $l \ge 2 m_k$ such 
that $\hat{\phi}(2m_kg_k/lm) \le \delta$. By Dini's Lemma, $f_n$ converges uniformly to 
$0$ on the closure of the set $\crl{f_m > g/k}$.
So there exists an $n \ge m$ such that $f_n \le 1/l$ on $\crl{f_m > g/k}$.
Since $\crl{f_m > g/k}$ is contained in $\crl{m_k g_k \ge 1}$ and $f_n \le f_m \le g/k$ on 
$\crl{f_m \le g/k}$, one has $(f_n- g/k)^+ \le m_k g_k/l$. Therefore,
\[
f_n \le g/k + (f_n-g/k)^+ \le g/k + m_k g_k/l,
\]
and 
\begin{align*}
	\phi \brak{f+ \frac{f_n}{m}} - \phi(f) &\le \hat{\phi} \brak{\frac{f_n}{m}} \le \hat{\phi} \brak{\frac{g}{km} + \frac{m_k g_k}{lm}}\\
	&
\le \frac{\hat{\phi}(2g/km) + \hat{\phi}(2 m_k g_k/lm)}{2} \le \delta.
\end{align*}
This shows that $\phi(f + f_n/m) \downarrow \phi(f)$, and the proof is complete.
\qed

\bigskip
\noindent
{\bf Proof of Corollary \ref{corAB}}\\
It follows from Theorem \ref{thmAB} that there exists a $\mu \in ca^+(X)$ such that
$\phi^*_X(\mu) < + \infty$. If $\phi$ is linear, this implies that $\ang{f,\mu} = \phi(f)$ for all $f \in X$.
\qed

\bigskip
\noindent
{\bf Proof of Proposition \ref{prop:reg}}\\
Fix a finite measure $\mu$ on $\sigma(X)$ and call a set $A \in \sigma(X)$ {\sl closed regular} if 
\[
\mu(A) = \sup \crl{\mu(B) : \mbox{$B \in \sigma(X)$, $B$ is closed and $B \subseteq A$}}.
\]
The collection of sets
\[
{\cal G} := \crl{A \in \sigma(X) : \mbox{ $A$ and $\Omega \setminus A$ are closed regular}}
\]
forms a sub-$\sigma$-algebra of $\sigma(X)$.
For a closed set $F \subseteq \mathbb{R}$ and $f \in X$, $f^{-1}(F)$ is a closed subset of $\Omega$.
Moreover, $\mathbb{R} \setminus F$ can be written as a countable union $\bigcup_n F_n$ of closed sets 
$F_n \subseteq \mathbb{R}$. Therefore, $\Omega \setminus f^{-1}(F)$ equals $\bigcup_n f^{-1}(F_n)$, which 
can be approximated with the closed sets $\bigcup_{n=1}^N f^{-1}(F_n)$. This shows that
$f^{-1}(F)$ belongs to ${\cal G}$. Since the sets $f^{-1}(F)$ generate $\sigma(X)$, one obtains 
${\cal G} = \sigma(X)$, which means that $\mu$ is closed regular.

If there exists a sequence $(K_n)$ of compact sets in $\sigma(X)$ such that $\mu(K_n) \to \mu(\Omega)$, 
then $\mu(A \cap K_n) \to \mu(A)$ for every $A \in {\cal F}$. Moreover, for every $n$ there exists a 
closed set $B_n \subseteq A \cap K_n$ in $\sigma(X)$ such that $\mu(B_n) \ge \mu(A \cap K_n) - 1/n$.
Since every closed subset of a compact set is compact, this shows that $\mu$ is regular.
\qed

\setcounter{equation}{0}
\section{Derivation of sup-representations}
\label{sec:sup}

For a sequence of non-empty compact Hausdorff spaces $(H_n)$, consider the sequence spaces 
\[
U := \crl{u \in \prod_n C(H_n) : \N{u} < \infty} \mbox{ and } 
V := \crl{\nu \in \prod_n ca_r(H_n) : \N{\nu} < \infty},
\]
where $C(H_n)$ denotes the set of all real-valued continuous functions on $H_n$,
$ca_r(H_n) = ca^+_r(H_n) - ca^+_r(H_n)$, where $ca^+_r(H_n)$ are all finite regular measures on the 
Borel $\sigma$-algebra of $H_n$, and the norms are defined as follows:
\beas
&& \N{u} := \sup_n \N{u_n}_{\infty} \mbox{ for the sup-norm $\N{.}_{\infty}$ and }\\
&& \N{\nu} := \sum_n \N{\nu_n}_{\rm tv} < \infty \mbox{ for the total variation norm $\N{.}_{\rm tv}$.}
\eeas
By the Riesz--Markov--Kakutani representation theorem (see, e.g., Theorem IV.6.3 in \cite{DS}), 
$ca_r(H_n)$ is the topological dual of $C(H_n)$. Therefore,
$(U,V)$ is a dual pair under the bilinear form $\ang{u,\nu} := \sum_n \ang{u_n,\nu_n}$.
By $V^+$ we denote the set of all $\nu \in V$ belonging to $\prod_n ca^+_r(H_n)$.
For a function $\psi \colon U \to \mathbb{R} \cup \crl{+ \infty}$, we consider the following two conditions:

\begin{itemize}
\item[{\bf (C')}]
$\psi$ is real-valued and $\psi(u^n) \uparrow \psi(u)$ 
for every increasing sequence $(u^n)$ in $U$ and $u \in U$ such that 
$u^n_m = u_m$ for all $n \ge m$.
\item[{\bf (D')}]
$\psi(u^n) \uparrow \psi(u)$ for every increasing sequence $(u^n)$ in $U$ and $u \in U$ such that 
$\lim_{n \to + \infty} \N{u^n_m - u_m}_{\infty} = 0$ for every $m$.
\end{itemize}

Note that $U$ contains $l^{\infty}$ as a subspace, and on $l^{\infty}$ the following holds:

\begin{lemma} \label{lemlinfty}
Every increasing convex functional $\psi \colon l^{\infty} \to \mathbb{R} \cup \crl{+\infty}$ 
satisfying {\rm (C')} or {\rm (D')} is $\sigma(l^{\infty}, l^1)$-lower semicontinuous. 
\end{lemma}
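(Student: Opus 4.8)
The plan is to show that the lower level sets $\{u \in l^\infty : \psi(u) \le c\}$ are $\sigma(l^\infty, l^1)$-closed for every $c \in \mathbb{R}$; since $\psi$ is convex, this is equivalent to $\sigma(l^\infty, l^1)$-lower semicontinuity. Because the level sets are convex, and because $\sigma(l^\infty, l^1)$ and the norm topology on $l^\infty$ have the same closed convex sets only after we invoke a Krein--Smulian type argument — actually the cleaner route — it suffices (by the Krein--Smulian theorem, or rather by the fact that a convex set in $l^\infty$ is $\sigma(l^\infty,l^1)$-closed iff its intersection with every ball $\{\N{u} \le r\}$ is $\sigma(l^\infty,l^1)$-closed) to fix $r > 0$ and show that $C_{c,r} := \{u : \psi(u) \le c, \ \N{u} \le r\}$ is closed in the weak-$*$ topology. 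On the ball $\{\N{u} \le r\}$, the topology $\sigma(l^\infty, l^1)$ coincides with the topology of pointwise convergence on $\mathbb{N}$ (since $l^1$ is spanned by the coordinate functionals together with the norm bound controlling tails). So I reduce to: if $u^k \to u$ pointwise on $\mathbb{N}$ with $\sup_k \N{u^k} \le r$ and $\psi(u^k) \le c$ for all $k$, then $\psi(u) \le c$.

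**Next I would** extract from pointwise convergence of a bounded sequence a monotone approximation amenable to (C') or (D'). The idea: truncate in the coordinate index. For a bounded sequence converging pointwise, I want to build, from the $u^k$'s, an increasing sequence $(v^n)$ in $l^\infty$ together with a limit $v$ such that $v \le u$ (so $\psi(v^n) \le \psi(v) \le \psi(u)$ fails in the wrong direction) — so instead I use convexity to push the other way. More precisely, passing to a subsequence I can assume $u^k \to u$ fast, and I consider averages or the functions $w^n_m := \inf_{k \ge n} u^k_m$ for the first few coordinates and a constant lower bound $-r$ on the tail: set $v^n := (\inf_{k\ge n} u^k_1, \dots, \inf_{k \ge n} u^k_n, -r, -r, \dots)$. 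Then $(v^n)$ is increasing in $l^\infty$, $v^n \le u^k$ for suitable $k$ up to the relevant coordinates, and $v^n \uparrow v$ where $v$ agrees with $u$ coordinatewise in the limit. By monotonicity of $\psi$ and convexity one estimates $\psi(v^n)$ in terms of the $\psi(u^k)$ (here one needs that adding a large negative constant on a tail does not increase $\psi$, which is monotonicity, and one may need to split $v^n = \lambda u^k + (1-\lambda)(\text{correction})$ and control the correction term — this is where the boundedness by $r$ and the finiteness $\psi < \infty$ on such elements enters). Then (C') (for $\psi$ real-valued, using the stabilization $v^n_m = v_m$ eventually — which does \emph{not} hold exactly here, so one adjusts) or (D') ($\N{v^n_m - v_m}_\infty \to 0$ for each $m$, which does hold since coordinates stabilize) gives $\psi(v^n) \to \psi(v)$, and finally $\psi(v) \le c$; a further monotonicity/approximation step upgrades $v$ to $u$.

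**The main obstacle** I anticipate is the mismatch between ``pointwise convergence'' and the ``coordinatewise-stabilizing increasing sequences'' that (C') and (D') are phrased for: (C') and (D') only give continuity along increasing sequences whose early coordinates are eventually \emph{frozen} (C') or converge in sup-norm on each fixed coordinate block (D'), whereas an arbitrary weak-$*$ convergent bounded net need not be approximable from below by such sequences while staying below the limit. The resolution is to work coordinate-block by coordinate-block: for fixed $m$ and fixed $\epsilon$, choose $k$ large so $u^k$ and $u$ agree within $\epsilon$ on $\{1,\dots,m\}$, replace $u^k$ beyond coordinate $m$ by its actual values but note $\N{u^k}\le r$, and assemble a genuine increasing sequence by taking, as $n$th term, the ``min over $k \ge n$ truncated at level $n$'' construction above, then verify (D')'s hypothesis $\lim_n \N{v^n_m - v_m}_\infty = 0$ holds for each fixed $m$ because the $m$th coordinate of $v^n$ equals $\inf_{k\ge n}u^k_m \to u_m = v_m$. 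For the (C') case one must instead diagonalize so that each coordinate is exactly frozen from some point on, passing to a subsequence along which $u^k_m$ is eventually constant — this requires a diagonal extraction and only yields $v$ with $v_m = \lim_k u^k_m = u_m$, again matching $u$. Once $\psi(v) \le c$ with $v = u$ pointwise and $v \le u$ or $v = u$, monotonicity closes the gap. I would also need to double-check the Krein--Smulian reduction to balls is legitimate here (it is, since $l^\infty = (l^1)^*$), and that on a ball $\sigma(l^\infty,l^1)$ is metrizable so sequences suffice — which holds because $l^1$ is separable.
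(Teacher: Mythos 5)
Your reduction and your treatment of (D') are essentially sound and close to the paper's own argument. The paper also invokes Krein--\v{S}mulian to restrict to the norm-bounded level sets $D_{a,b}=\{x:\psi(x)\le a,\ \N{x}_\infty\le b\}$, then (instead of your metrizability-of-the-ball argument) embeds $l^\infty$ into $l^1(\eta)$ for $\eta(n)=2^{-n}$ and uses convexity plus Mazur to reduce to norm convergence there; both routes end at the same place, namely a bounded sequence $(x^j)$ with $\psi(x^j)\le a$ converging coordinatewise to $x$. The key construction is also the same: $y^n_m:=\inf_{j\ge n}x^j_m$ gives an increasing sequence in the level set with $y^n_m\uparrow x_m$ for each $m$, and (D') applies directly. (Your extra convexity-splitting worries are unnecessary: $y^n\le x^n$ and monotonicity already give $\psi(y^n)\le a$.)

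The genuine gap is in your (C') case. You propose to ``diagonalize so that each coordinate is exactly frozen from some point on, passing to a subsequence along which $u^k_m$ is eventually constant.'' A convergent sequence of reals has no eventually constant subsequence in general (take $u^k_m=u_m+1/k$), so no extraction can produce the stabilization $v^n_m=v_m$ for $n\ge m$ that (C') requires; this step fails. The paper closes this gap differently, and the fix uses the hypothesis you did not exploit: under (C') the functional $\psi$ is \emph{real-valued}, hence for fixed tail the map $(t_1,\dots,t_m)\mapsto\psi(t_1,\dots,t_m,y^1_{m+1},y^1_{m+2},\dots)$ is a finite, convex, therefore continuous function on $\mathbb{R}^m$. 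Letting $n\to\infty$ in the first $m$ coordinates of $y^n$ (with the tail held at $y^1$) yields
$$
\psi(x_1,\dots,x_m,y^1_{m+1},\dots)=\lim_n\psi(y^n_1,\dots,y^n_m,y^1_{m+1},\dots)\le\lim_n\psi(y^n)\le a ,
$$
and the sequence $z^m:=(x_1,\dots,x_m,y^1_{m+1},\dots)$, indexed by $m$, is increasing with coordinates \emph{exactly} frozen at $x_k$ for $m\ge k$, so (C') applies to $(z^m)\uparrow x$ and gives $\psi(x)\le a$. You should replace your diagonal extraction by this two-step argument (finite-dimensional continuity first, then (C') along $(z^m)$); without it the (C') half of the lemma is not proved.
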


\begin{proof}
The lemma follows if we can show that all lower level sets of $\psi$ are
$\sigma(l^{\infty}, l^1)$-closed. By the Krein--\v{S}mulian theorem
(see, e.g., Theorem V.5.7 in \cite{DS}), it is enough to show that the sets 
\[
D_{a,b} = \crl{x \in l^{\infty} : \psi(x) \le a, \; \N{x}_{\infty} \le b}, \quad a,b \in \mathbb{R},
\]
are $\sigma(l^{\infty}, l^1)$-closed, which is the case if and only if they are 
$\sigma(l^{\infty}, l^1(\eta))$-closed, where $l^1(\eta)$ is the 
$l^1$-space with respect to the probability measure $\eta$ on 
$\mathbb{N}$ given by $\eta(n) = 2^{-n}$, and the pairing 
on $(l^{\infty}, l^1(\eta))$ is $\ang{x,y} = \sum_n x_n y_n 2^{-n}$.
The embedding of $l^{\infty}$ in $l^1(\eta)$ is continuous with respect to 
$\sigma(l^{\infty}, l^1(\eta))$ and $\sigma(l^1(\eta),l^{\infty})$.
So it is sufficient to show that the sets $D_{a,b}$ are $\sigma(l^1(\eta),l^{\infty})$-closed.
But by convexity, this follows if it can be shown that they are norm-closed in $l^1(\eta)$.
To do that, consider a sequence $(x^n)$ in $D_{a,b}$ converging to 
$x$ in the $l^1(\eta)$-norm. Then $\N{x}_{\infty} \le b$, and
$y^n_m := \inf_{j \ge n} x^j_m$ defines a sequence $(y^n)$ in $D_{a,b}$
which increases component-wise to $x$. 

Under (C') $\psi$ is real-valued, and since every real-valued convex function on 
$\mathbb{R}^m$ is continuous, one has
\begin{align*}
\psi(x_1, \dots, x_m, y^1_{m+1}, y^1_{m+2}, \dots )
&= \lim_{n \to + \infty} \psi(y^n_1, \dots, y^n_m, y^1_{m+1}, y^1_{m+2}, \dots )\\
& \le \lim_{n \to + \infty} \psi(y^n) \le a
\end{align*}
for all $m \ge 1$. Therefore, it follows from (C') that $x$ belongs to $D_{a,b}$.
If $\psi$ satisfies (D'), one obtains $\psi(x) = \lim_{n \to + \infty} \psi(y^n) \le a$. So $x$ belongs to $D_{a,b}$.
\end{proof}

We also need the following 

\begin{lemma} \label{lemcompact}
For every $y \in l^1$, the set $A_y = \crl{\nu \in V : \N{\nu_n}_{\rm tv} \le |y_n|}$
is $\sigma(V,U)$-compact.
\end{lemma}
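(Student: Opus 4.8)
The plan is to identify $A_y$ with a product of closed balls in the individual dual spaces $ca_r(H_n)$ and invoke a Tychonoff-type argument. First I would observe that, for fixed $n$, the ball $B_n = \crl{\nu_n \in ca_r(H_n) : \N{\nu_n}_{\rm tv} \le |y_n|}$ is exactly the polar of the ball $\crl{u_n \in C(H_n) : \N{u_n}_\infty \le 1/|y_n|}$ (with the obvious convention when $y_n = 0$, in which case $B_n = \crl{0}$), so by the Banach--Alaoglu theorem $B_n$ is $\sigma(ca_r(H_n), C(H_n))$-compact. Hence the product $P := \prod_n B_n$ is compact in the product topology $\prod_n \sigma(ca_r(H_n), C(H_n))$ by Tychonoff's theorem.

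Next I would check that $A_y = P$ as sets: a sequence $\nu = (\nu_n)$ with $\N{\nu_n}_{\rm tv} \le |y_n|$ automatically satisfies $\N{\nu} = \sum_n \N{\nu_n}_{\rm tv} \le \sum_n |y_n| = \N{y}_{l^1} < \infty$, so it lies in $V$, and conversely any $\nu \in A_y$ has components in the respective $B_n$. The remaining point, which I expect to be the main obstacle, is that the $\sigma(V,U)$-topology restricted to $A_y$ coincides with the product topology restricted to $P$. The product topology is clearly coarser, since each evaluation $\nu \mapsto \ang{u_n, \nu_n}$ (for a single coordinate $u_n \in C(H_n)$, extended by zero) is $\sigma(V,U)$-continuous. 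For the reverse inclusion one must show that for arbitrary $u \in U$ the map $\nu \mapsto \ang{u,\nu} = \sum_n \ang{u_n,\nu_n}$ is continuous on $A_y$ for the product topology; here the uniform bound $\N{u} = \sup_n \N{u_n}_\infty < \infty$ together with $\sum_n |y_n| < \infty$ gives a uniform tail estimate $|\sum_{n > N} \ang{u_n,\nu_n}| \le \N{u} \sum_{n>N} |y_n|$, valid for every $\nu \in A_y$, which makes the partial sums $\sum_{n \le N} \ang{u_n,\nu_n}$ converge uniformly on $A_y$ to $\ang{u,\cdot}$. A uniform limit of continuous functions is continuous, so $\ang{u,\cdot}$ is product-topology continuous on $A_y$, and the two topologies agree.

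Finally, since $A_y = P$ is compact in the product topology and the $\sigma(V,U)$-topology agrees with it on this set, $A_y$ is $\sigma(V,U)$-compact, which completes the proof. The only subtlety worth spelling out in the writeup is the tail estimate justifying the uniform convergence of the partial sums, as everything else is a routine application of Banach--Alaoglu, Tychonoff, and the Riesz representation theorem already cited in the text.
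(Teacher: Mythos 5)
Your argument is correct, but it follows a genuinely different route from the paper's. You establish compactness coordinate by coordinate: each ball $B_n = \crl{\nu_n \in ca_r(H_n) : \N{\nu_n}_{\rm tv} \le |y_n|}$ is weak-$*$ compact by Banach--Alaoglu (via Riesz, the total variation norm is the dual of the sup-norm), Tychonoff makes $P = \prod_n B_n$ compact for the product topology, and the uniform tail bound $\abs{\sum_{n>N} \ang{u_n,\nu_n}} \le \N{u} \sum_{n>N} |y_n|$ shows each $\ang{u,\cdot}$, $u \in U$, is a uniform limit on $P$ of product-continuous partial sums, hence product-continuous; so the identity from $P$ (product topology) onto $A_y$ with $\sigma(V,U)$ is continuous and $A_y$ is $\sigma(V,U)$-compact. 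Note that for this conclusion you only need that $\sigma(V,U)$ restricted to $A_y$ is coarser than the product topology; the converse comparison you also verify is harmless but not required. The paper instead applies Banach--Alaoglu a single time, to the unit ball of $\tilde{V} = \crl{\nu : \sup_n \N{\nu_n}_{\rm tv} < \infty}$ regarded as the dual of the $l^1$-type direct sum $\tilde{U} = \crl{u : \sum_n \N{u_n}_{\infty} < \infty}$, and then maps this ball onto $A_y$ by the rescaling $(\nu_n) \mapsto (y_n \nu_n)$, which is $\sigma(\tilde{V},\tilde{U})$-to-$\sigma(V,U)$ continuous because $(y_n u_n) \in \tilde{U}$ whenever $u \in U$ and $y \in l^1$ --- the same summability that powers your tail estimate. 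Your version is more self-contained, since it does not rely on identifying the dual of an $l^1$-direct sum of Banach spaces, at the cost of the explicit topology comparison on $P$; the paper's version is shorter once that duality is granted. The only point worth spelling out carefully in your write-up is, as you anticipate, the uniform convergence of the partial sums, and also the degenerate convention $B_n = \crl{0}$ when $y_n = 0$, which you already handle.
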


\begin{proof}
\[
\tilde{U} := \crl{u \in \prod_n C(H_n) : \sum_n \N{u_n}_{\infty} < \infty}
\]
is a Banach space with topological dual 
\[
\tilde{V} := \crl{\nu \in \prod_n ca_r(H_n) : \sup_n \N{\nu_n}_{\rm tv} < \infty}.
\]
Therefore, one obtains from the Banach--Alaoglu theorem that the norm ball
$\{\nu \in \tilde{V} : \sup_n \N{\nu_n}_{\rm tv} \le 1\}$ is 
$\sigma(\tilde{V}, \tilde{U})$-compact. But for $y \in l^1$, 
the mapping $(\nu_n) \mapsto (\nu_n y_n)$ continuously embeds 
$\tilde{V}$ in $V$ with respect to $\sigma(\tilde{V}, \tilde{U})$ and 
$\sigma(V,U)$. It follows that $A_y$ is $\sigma(V,U)$-compact.
\end{proof}

Now we are ready to prove a representation result for increasing convex functionals on $U$.

\begin{proposition} \label{UVlsc} 
Every increasing convex functional $\psi \colon U \to \mathbb{R} \cup \crl{+ \infty}$ satisfying
{\rm (C')} or {\rm (D')} has a representation of the form
\[
\psi(u) = \sup_{\nu \in V^+} \brak{\ang{u,\nu} - \psi^*(\nu)}
\quad \mbox{for} \quad \psi^*(\nu) := \sup_{u \in U} \brak{\ang{u, \nu} - \psi(u)}.
\]
\end{proposition}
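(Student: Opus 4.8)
The plan is to deduce the representation from the Fenchel--Moreau theorem, so that everything comes down to two points: that the supremum over $V$ can be restricted to $V^+$, and that $\psi$ is $\sigma(U,V)$-lower semicontinuous. The first is routine monotonicity. If $\nu\in V\setminus V^+$, some component $\nu_n$ has a nontrivial negative part, and using the Hahn decomposition of $\nu_n$ together with the inner and outer regularity of $|\nu_n|$ on the compact ---  hence normal ---  Hausdorff space $H_n$ and Urysohn's lemma, one produces a nonnegative $u_n\in C(H_n)$ with $\ang{u_n,\nu_n}<0$. Then $u:=(0,\dots,0,u_n,0,\dots)\in U$ is nonnegative with $\ang{u,\nu}<0$, so for any fixed $u_0\in\dom\psi$ the monotonicity of $\psi$ gives $\psi^*(\nu)\ge\ang{u_0-tu,\nu}-\psi(u_0)\to+\infty$ as $t\to\infty$. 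Hence $\psi^*=+\infty$ on $V\setminus V^+$, and since $\psi(u)\ge\ang{u,\nu}-\psi^*(\nu)$ is immediate from the definition of $\psi^*$, it remains to prove $\psi(u)\le\psi^{**}(u):=\sup_{\nu\in V}(\ang{u,\nu}-\psi^*(\nu))$.

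Taking all $H_n$ to be singletons recovers Lemma \ref{lemlinfty}, and the argument generalizes that proof; the essential difference is that the Krein--\v{S}mulian reduction to norm balls used there is unavailable, because $U$ is not the full dual of $V$, and this is where Lemma \ref{lemcompact} comes in instead. First, $\psi$ is norm-lower semicontinuous: with $e:=(\mathbf{1}_{H_n})_n\in U$, the bound $\N{v-u}\le\varepsilon$ forces $u-\varepsilon e\le v\le u+\varepsilon e$, hence $\psi(v)\ge\psi(u-\varepsilon e)$ by monotonicity, and $\psi(u-(1/j)e)\uparrow\psi(u)$ by (D') applied to the increasing sequence $u-(1/j)e\uparrow u$ (under (C'), $\psi$ is moreover real-valued and $t\mapsto\psi(u+te)$ is a finite convex function on $\mathbb{R}$, so $\psi$ is norm-continuous). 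Next, write $V=\bigcup_{y\in l^1_+}A_y$ with $A_y=\crl{\nu\in V:\N{\nu_n}_{\rm tv}\le y_n}$; by Lemma \ref{lemcompact} each $A_y$ is convex and $\sigma(V,U)$-compact, so Sion's minimax theorem applies to $(v,\nu)\mapsto\ang{u-v,\nu}+\psi(v)$ on $U\times A_y$ (convex and norm-lower semicontinuous in $v$, affine and $\sigma(V,U)$-continuous in $\nu$, with $A_y$ compact), and since $\sup_{\nu\in A_y}\ang{w,\nu}=\sum_n y_n\N{w_n}_\infty=:p_y(w)$ this yields
\[
\sup_{\nu\in A_y}\brak{\ang{u,\nu}-\psi^*(\nu)}=\inf_{v\in U}\brak{\psi(v)+p_y(u-v)}.
\]
Taking the supremum over $y\in l^1_+$ gives $\psi^{**}(u)=\sup_{y\in l^1_+}\inf_{v\in U}(\psi(v)+p_y(u-v))$, and $v=u$ shows $\psi^{**}(u)\le\psi(u)$; the real content is the reverse inequality.

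For the reverse inequality fix $a<\psi(u)$ and suppose, toward a contradiction, that $\inf_{v\in U}(\psi(v)+p_y(u-v))<a$ for every $y\in l^1_+$. Choosing weights $y^{(k)}$ with $y^{(k)}_n\to\infty$ for each $n$ produces $v^k\in U$ with $\psi(v^k)<a$ and $p_{y^{(k)}}(u-v^k)<a-\psi(v^k)$; after replacing $v^k$ by $v^k\wedge(u+e)$ ---  which lowers $\psi(v^k)$ and does not increase any $\N{u_n-v^k_n}_\infty$ ---  and a coercivity-type estimate bounding $\psi(v^k)$ from below uniformly in $k$, one obtains $\N{v^k_n-u_n}_\infty\to0$ for every $n$, with $(v^k)$ bounded in $U$. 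Putting $\varepsilon^k_n:=\sup_{j\ge k}\N{v^j_n-u_n}_\infty$ (finite, nonincreasing in $k$, and tending to $0$ for fixed $n$) and $w^k_n:=u_n-\varepsilon^k_n\mathbf{1}_{H_n}$, one checks that $w^k\in U$, $w^k\le v^k$, the sequence $(w^k)$ is increasing, and $\N{w^k_m-u_m}_\infty=\varepsilon^k_m\to0$ for each $m$, while $\psi(w^k)\le\psi(v^k)<a$ by monotonicity. Under (D'), condition (D') applied to $w^k\uparrow u$ gives $\psi(u)=\lim_k\psi(w^k)\le a$, a contradiction. Under (C') one passes instead to $z^k:=(u_1,\dots,u_k,w^k_{k+1},w^k_{k+2},\dots)$, which is increasing with $z^k_i=u_i$ for $i\le k$, so $\psi(z^k)\to\psi(u)$ by (C'); the finite-dimensional continuity device from the proof of Lemma \ref{lemlinfty} (replacing the leading coordinate blocks one at a time, using continuity of real-valued convex functions on finite products) then gives $\psi(z^k)\le a+o(1)$, again a contradiction. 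Hence $\psi^{**}(u)\ge a$ for all $a<\psi(u)$, so $\psi^{**}(u)=\psi(u)$; combined with the first paragraph this is the asserted representation over $V^+$.

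The step I expect to be the main obstacle is making this sequential reduction legitimate and uniform across the infinitely many coordinate blocks: $\sigma(U,V)$ is strictly coarser than the norm topology of $U$, and since $V$ is far from being all of $U^*$ (and is not separable) $\sigma(U,V)$ need not be metrizable on bounded sets, so one cannot, as in the $l^\infty$-case, first pass to norm balls by Krein--\v{S}mulian and then argue with sequences ---  Lemma \ref{lemcompact} together with Sion's minimax is precisely the substitute that turns the problem into a genuinely sequential one. The remaining delicate point is the uniform lower bound on $\psi(v^k)$ (needed to force $\N{v^k_n-u_n}_\infty\to0$ for every $n$), which becomes nontrivial when $\psi$ is unbounded below; I would handle it by a two-scale choice of the weights $y^{(k)}$ (large on an initial block of coordinates, $\ell^1$-small afterwards) together with Lemma \ref{lemlinfty} applied to the coordinate-index direction, the norm-lower semicontinuity of $\psi$, and the truncations above. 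The other ingredients ---  the Fenchel--Moreau step and the reduction to $V^+$ ---  are routine.
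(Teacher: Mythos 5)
Your overall architecture (Fenchel--Moreau, reduction to $V^+$ by monotonicity, Lemma \ref{lemcompact} as the substitute for Krein--\v{S}mulian) matches the paper's, but the core step is genuinely different: the paper proves that each level set $D_a=\crl{u:\psi(u)\le a}$ is $\tau(U,V)$-closed (hence $\sigma(U,V)$-closed by Mackey--Arens), working with a \emph{net that is already assumed to converge}, and reduces to $l^\infty$ via the coordinate projections $\pi_n$; you instead try to prove $\psi^{**}=\psi$ directly through the dual formula $\psi^{**}(u)=\sup_{y\in l^1_+}\inf_{v}(\psi(v)+p_y(u-v))$ and a contradiction argument on approximate minimizers. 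The minimax identity itself is fine (Sion applies on $U\times A_y$ once norm-lower semicontinuity of $\psi$ is established, which you do correctly), and the reduction to $V^+$ is fine.

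The genuine gap is exactly the ``coercivity-type estimate'' you defer. From $\psi(v^k)+p_{y^{(k)}}(u-v^k)<a$ you can only conclude $\N{u_n-v^k_n}_\infty<(a-\psi(v^k))/y^{(k)}_n$, so the coordinate-wise convergence $\N{v^k_n-u_n}_\infty\to 0$ --- on which the entire construction of $w^k$ and the appeal to (C')/(D') rest --- requires $\psi(v^k)$ bounded below, and nothing in your argument provides this. The truncation $v^k\wedge(u+e)$ bounds $v^k$ from above only; a lower truncation $v^k\vee(u-Me)$ increases $\psi$ and destroys the bound $\psi(v^k)<a$; and the convexity estimate $\psi(v)\ge 2\psi(u)-\psi(2u-v)$ needs $\N{u-v}$ bounded, which is again what you lack because $y^{(k)}_n\to 0$ as $n\to\infty$. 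More seriously, obtaining such a lower bound in general amounts to exhibiting an affine minorant of $\psi$ of the form $\ang{\cdot,\nu_0}-c$ with $\nu_0\in V$, i.e.\ to the properness of $\psi^*$ on $V$ --- which is part of the conclusion, so the sketched fix risks circularity. (One can check on the example $\psi=s$, $H_n$ singletons, that $\psi\,\Box\, p_y\equiv-\infty$ for all $y$ with $\N{y}_{l^1}<1$, so the inf-convolution genuinely degenerates for some admissible weights; ruling this out for \emph{suitable} $y$ is the whole difficulty.) A secondary, fixable issue: in the (C') endgame the diagonal sequence $z^k=(u_1,\dots,u_k,w^k_{k+1},\dots)$ does not obviously satisfy $\psi(z^k)\le a+o(1)$, since the perturbations $\varepsilon^k_i$ for $i$ near $k$ need not be small; you should instead freeze the tail at $w^1$ and let only the head index grow, as in the paper's proof of Lemma \ref{lemlinfty}. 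The paper avoids the coercivity problem entirely by proving closedness of $D_a$ along a convergent net rather than extracting convergence from approximate minimizers; as it stands, your route does not close.
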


\begin{proof}
In the case $\psi \equiv + \infty$, the proposition is clear. So let us assume that $\psi(u) < + \infty$
for at least one $u \in U$.
Then it is enough to show that
\be \label{psirep}
\psi(u) = \sup_{\nu \in V} \brak{\ang{u,\nu} - \psi^*(\nu)}, \quad u \in U,
\ee
since it follows from the monotonicity of $\psi$ that $\psi^*(\nu) = + \infty$ for all $\nu \in V \setminus V^+$.
But \eqref{psirep} is a consequence of the Fenchel--Moreau theorem 
(see, e.g., Theorem 3.2.2 in \cite{Zalinescu}) if we can show that 
$\psi$ is $\sigma(U,V)$-lower semicontinuous, or equivalently, all lower level sets 
$D_a = \crl{u \in U : \psi(u) \le a}$ are $\sigma(U,V)$-closed. 
Moreover, since every $D_a$ is convex, it follows from the Hahn--Banach separation theorem 
together with the Mackey--Arens theorem (see, e.g., Theorem IV.3.2 in \cite{SW}) that it is 
$\sigma(U,V)$-closed if we can show that it is closed in the Mackey topology 
$\tau(U,V)$. So let $(u^{\alpha})$ be a net in $D_a$ such that $u^{\alpha} \to \hat{u} \in U$ in $\tau(U,V)$. 
We know from Lemma \ref{lemcompact} that for every $y \in l^1$, the set
$A_y := \crl{\nu \in V : \N{\nu_n}_{\rm tv} \le |y_n|}$
is $\sigma(V,U)$-compact. Therefore, one has
\be \label{uaconv}
\sum_n \N{u^{\alpha}_n - \hat{u}_n}_{\infty} |y_n|
\le \sup_{\nu \in A_y} \abs{\ang{u^{\alpha} - \hat{u},\nu}} \to 0.
\ee
If $\psi$ satisfies (C'), we define
the projections $\pi_n \colon U \to l^{\infty}$ as follows: for $m > n$,
\[
\pi_n(u)_m := \underline{u}_m := \min_{z \in H_m} u_m(z),
\]
and for $m =1, \dots, n$,
\begin{align*}
\pi_n(u)_1 &:= \min \big\{x \in \mathbb{R} : x \ge \underline{u}_1, \, \psi(x, u_2, \dots, u_n, \underline{u}_{n+1}, \dots)
\\
&\qquad\qquad \qquad \qquad \qquad\qquad\qquad\qquad = \psi(u_1, u_2, \dots, u_n, \underline{u}_{n+1}, \dots)\big\}\\
\pi_n(u)_2 &:= \min \big\{x \in \mathbb{R} : x \ge \underline{u}_2, \, 
 \psi(\pi_n(u)_1, x, u_3, \dots, u_n, \underline{u}_{n+1}, \dots)\\
&\qquad\qquad \qquad \qquad \qquad\qquad\qquad \qquad = \psi(u_1, \dots, u_n, \underline{u}_{n+1}, \dots)\big\}\\
 & \dots\\
 \pi_n(u)_n &:= \min \big\{x \in \mathbb{R} : x \ge \underline{u}_n, \, 
 \psi(\pi_n(u)_1, \dots, \pi_n(u)_{n-1}, x, \underline{u}_{n+1}, \dots)\\ 
&\qquad\qquad \qquad \qquad \qquad\qquad\qquad\qquad = \psi(u_1, \dots, u_n, \underline{u}_{n+1}, \dots) \big\}.
\end{align*}
Since $x \mapsto \psi(x, u_2, \dots, u_n, \underline{u}_{n+1}, \dots)$ is a convex function 
from $\mathbb{R}$ to $\mathbb{R}$, it is continuous. Therefore, the minimum in the 
definition of $\pi_n(u)_1$ is attained, and 
\[
\psi(\pi_n(u)_1, u_2, \dots, u_n, \underline{u}_{n+1}, \dots)=
\psi(u_1, u_2, \dots, u_n, \underline{u}_{n+1}, \dots).
\]
Analogously, the other minima are attained, and 
\[
\psi \circ \pi_n (u) = \psi(u_1, \dots, u_n, \underline{u}_{n+1}, \dots) \quad \mbox{for all } u \in U.
\]
Since $\psi$ is increasing, $\pi_n(u^{\alpha})$ is in $D_a$ for all $\alpha$, and
by \eqref{uaconv}, one has for each $y \in l^1$,
\[
\abs{\ang{\pi_n(u^{\alpha}) -\pi_n(\hat{u}), y}} \le 
\sum_m \N{u^{\alpha}_m - \hat{u}_m}_{\infty} |y_m| \to 0,
\]
showing that $\pi_n(u^{\alpha}) \to \pi_n(\hat{u})$ in $\sigma(l^{\infty}, l^1)$.
From Lemma \ref{lemlinfty} we know that $\psi$ restricted to $l^{\infty}$ is $\sigma(l^{\infty},l^1)$-lower semicontinuous.
Therefore, $\pi_n(\hat{u})$ is in $D_a$ for all $n$, and one obtains from (C') that
\[
\psi \circ \pi_n (\hat{u}) = \psi(\hat{u}_1, \dots, \hat{u}_n, \underline{\hat{u}}_{n+1}, \dots) 
\uparrow \psi(\hat{u}) \quad \mbox{for } n \to +\infty.
\]
This shows that $\hat{u}$ belongs to $D_a$, which completes the proof in the case
where $\psi$ satisfies (C').

If $\psi$ fulfills (D'), we fix $n \ge 1$ and note that due to \eqref{uaconv}, there exists an $\alpha_0$ 
such that 
\[
u^{\alpha}_m \ge \hat{u}_m - \frac{1}{n} \quad \mbox{for all } \alpha \ge \alpha_0
\mbox{ and } m =1, \dots, n.
\]
It follows that
\[
\brak{\hat{u}_1 - \frac{1}{n}, \dots, \hat{u}_n - \frac{1}{n}, \underline{u}^{\alpha}_{n+1} - \frac{1}{n}, 
\underline{u}^{\alpha}_{n+2} - \frac{1}{n}, \dots}
\mbox{ is in } D_a \mbox{ for all }\alpha \ge \alpha_0.
\]
As above, one deduces from \eqref{uaconv} that
\[
\brak{\underline{u}^{\alpha}_{n+1} - \frac{1}{n}, \underline{u}^{\alpha}_{n+2} - \frac{1}{n}, \dots} \to 
\brak{\underline{\hat{u}}_{n+1} - \frac{1}{n}, \underline{\hat{u}}_{n+2} - \frac{1}{n}, \dots} \mbox{ in } \sigma(l^{\infty}, l^1).
\]
So, since 
\[
x \mapsto \psi \brak{\hat{u}_1 - \frac{1}{n}, \dots, \hat{u}_n - \frac{1}{n}, x_1, x_2, \dots}
\]
defines an increasing convex mapping on $l^{\infty}$ with property (D'), one obtains from 
Lemma \ref{lemlinfty} that
\[
\brak{\hat{u}_1 - \frac{1}{n}, \dots, \hat{u}_n - \frac{1}{n}, \underline{\hat{u}}_{n+1} - \frac{1}{n},
\underline{\hat{u}}_{n+2} - \frac{1}{n}, \dots} \mbox{ belongs to } D_a \mbox{ for all } n \ge 1.
\]
Now it follows from (D') that $\hat{u}$ is in $D_a$, and the proof is complete.
\end{proof}

\bigskip
\noindent
{\bf Proof of Theorem \ref{thm:Borel}}\\
We first prove \eqref{repcont}. It is immediate from the definition of $\phi^*_{C_b}$ that 
$\phi(f) \ge \sup_{\mu \in ca^+_r} (\ang{f,\mu} - \phi^*_{C_b}(\mu))$
for all $f \in C_b$. We show the other inequality in the following three steps:

Step 1: For $H_n = K_n$, define the function $\psi \colon U = \prod_n C(H_n) \to \mathbb{R} \cup \crl{+ \infty}$ by
\[
\psi(u) := \phi \brak{\sum_n u_n 1_{K_n \setminus K_{n-1}}}, \quad \mbox{where } K_0 := \emptyset.
\]
Then $\psi$ is increasing and convex. Moreover, it fulfills (C') or (D') if $\phi$ satisfies (C) or (D), 
respectively. So it follows from Proposition \ref{UVlsc} that 
\[
\psi(u) = \sup_{\nu \in V^+} \brak{\ang{u,\nu} - \psi^*(\nu)} \quad \mbox{for all } u \in U.
\]

Step 2:
For every $\nu \in V^+$, $\mu_{\nu}(A) = \sum_n \nu_n(A \cap K_n)$ defines an element of $ca^+_r({\cal F})$. 
Indeed, $\mu_{\nu}$ is a finite measure since $\N{\nu} = \sum_n \N{\nu_n}_{\rm tv} < \infty$.
Moreover, for given $A \in {\cal F}$ and $\varepsilon > 0$, there exist compact sets $F_n \subseteq A \cap K_n$
such that $\nu_n(F_n) \ge \nu_n(A \cap K_n) - 2^{-n-1} \varepsilon$. So, for $m \in \mathbb{N}$
large enough, $F = \bigcup_{n = 1}^m F_n$ is compact, 
$F \subseteq A$ and $\mu_{\nu}(F) \ge \mu_{\nu}(A) - \varepsilon$.

Step 3: Since $\phi$ satisfies (C) or (D), one has for each $f \in C_b$,
\[
\phi(f) = \phi(f 1_{\bigcup_n K_n}) = \psi(f \mid_{K_1}, f \mid_{K_2}, ...).
\]
Therefore,
\[
\phi(f) = \sup_{\nu \in V^+} \brak{\sum_n \ang{f \mid_{K_n}, \nu_n} - \psi^*(\nu)}
= \sup_{\nu \in V^+} \brak{\ang{f, \mu_{\nu}} - \psi^*(\nu)}, 
\]
and it is enough to show that $\phi^*_{C_b}(\mu_{\nu}) \le \psi^*(\nu)$ for all $\nu \in V^+$ 
to complete the proof of \eqref{repcont}. But this readily follows from
\beas
\phi^*_{C_b}(\mu_{\nu}) &=& \sup_{f \in C_b} \brak{\ang{f, \mu_{\nu}} - \phi(f)}
= \sup_{f \in C_b} \brak{\sum_n \ang{f \mid_{K_n}, \nu_n} - \psi(f \mid_{K_1}, f \mid_{K_2}, ...)}\\
&\le&  \sup_{u \in U} \brak{\ang{u, \nu} - \psi(u)} = \psi^*(\nu).
\eeas

To show \eqref{inequsc} we fix an $f \in U_b$ and a constant $\varepsilon > 0$. 
For every $\delta > 0$, there exists a measurable partition $(A_m)_{m =1}^M$ of $\Omega$ 
and real numbers $a_1 < \dots < a_M$ such that the step function $g = \sum_{m=1}^M a_m 1_{A_m}$ 
satisfies $g \le f \le g + \delta$, and by passing to the upper semicontinuous hull, 
one can assume $g$ to be upper semicontinuous. If $\phi$ satisfies (C), then 
$x \mapsto \phi(f + x)$ defines a convex function from $\mathbb{R}$ to $\mathbb{R}$. 
So it has to be continuous, and since $\phi$ is increasing, one can ensure that
$\phi(g) \ge \phi(f) -\varepsilon$ by choosing $\delta >0$ small enough. 
If $\phi$ satisfies (D) and $\phi(f) < + \infty$, one obtains directly that 
$\phi(g) \ge \phi(f) - \varepsilon$ for $\delta > 0$ small enough. On the other hand, if $\phi$ satisfies 
(D) and $\phi(f) = + \infty$, then $\phi(g) \ge \varepsilon$ for $\delta > 0$ small enough.
Now denote
\begin{align*}
U^M &:= \crl{u \in \prod_n C(K_n)^M : \sup_{n,m} \N{u_{nm}}_{\infty} < \infty},\\ \quad
V^M &:= \crl{\nu \in \prod_n ca_r(K_n)^M : \sum_{n,m} \N{\nu_{nm}}_{\rm tv} < \infty},
\end{align*}
and define $\psi \colon U^M \to \mathbb{R} \cup \crl{+ \infty}$ by
\[
\psi(u) := \phi \brak{\sum_n \sum_{m=1}^M u_{nm} 1_{B_{nm}}},
\]
where $K_0 := \emptyset$ and $B_{nm} := (K_n \setminus K_{n-1}) \cap A_m$.
Then $\psi$ is increasing, convex and satisfies (C') or (D'). Therefore, it follows from 
Proposition \ref{UVlsc} that
\[
\psi(u) = \sup_{\nu \in (V^M)^+} \brak{\ang{u,\nu} - \psi^*(\nu)}, \mbox{ where} \quad
\psi^*(\nu) = \sup_{u \in U^M} \brak{\ang{u,\nu} - \psi(u)}.
\]
If $\phi(h) = + \infty$ for all $h \in C_b$, then $\phi^*_{C_b} \equiv - \infty$, and 
\eqref{inequsc} is obvious. So let us assume there exists an $h \in C_b$ such that $\phi(h) < + \infty$.
Then it follows that $\nu_{nm}(K_n \setminus \bar{B}_{nm}) = 0$
for all $\nu \in (V^M)^+$ satisfying $\psi^*(\nu) < + \infty$. Indeed, assume 
$\nu_{nm}(K_n \setminus \bar{B}_{nm}) > 0$. Then, since $\nu_{nm}$ is regular, 
there exists a closed set $F \subseteq K_n \setminus \bar{B}_{nm}$ with $\nu_{nm}(F) > 0$.
By Theorem 2.48 in \cite{AB}, $K_n$ is normal. So it follows from Urysohn's lemma 
that there exists a continuous function $\varphi \colon K_n \to [0,1]$ which is $1$ 
on $F$ and $0$ on $\bar{B}_{nm}$. This gives
\beas
\psi^*(\nu) &\ge& \sup_{x \in \mathbb{R}_+} \brak{ \sum_i \sum_{j=1}^M \ang{h \mid_{K_i}, \nu_{ij}}
+ \ang{x\varphi,\nu_{nm}} - \phi(h + x \varphi 1_{B_{nm}})}\\
&=& \sup_{x \in \mathbb{R}_+} \brak{ \sum_i \sum_{j=1}^M \ang{h \mid_{K_i}, \nu_{ij}}
+ \ang{x\varphi,\nu_{nm}} - \phi(h)} = + \infty.
\eeas
Now define $u \in U^M$ by $u_{nm} = a_m$. Since $\phi$ satisfies (C) or (D), one has
\[
\phi(g) = \phi(g 1_{\bigcup_n K_n}) = \psi(u),
\]
and therefore, by the upper semicontinuity of $g$,
\beas
&& \phi(g) = \sup_{\nu \in (V^N)^+} \brak{\sum_n
\sum_{m=1}^M \ang{u_{nm} , \nu_{nm}} - \psi^*(\nu)}\\ &\le& \sup_{\nu \in (V^N)^+} \brak{\sum_n \sum_{m=1}^M
\ang{g 1_{\bar{B}_{nm}} , \nu_{nm}} - \psi^*(\nu)} = \sup_{\nu \in (V^N)^+} \brak{\ang{g, \mu_{\nu}} - \psi^*(\nu)},
\eeas
where $\mu_{\nu}$ is given by $\mu_{\nu}(A) = \sum_n \sum_{m=1}^M \nu_{nm}(A \cap K_n)$. 
It follows as above that $\mu_{\nu}$ belongs to $ca^+_r({\cal F})$, and 
for all $\nu \in (V^M)^+$, one has
\beas
&& \phi^*_{C_b}(\mu_{\nu}) = \sup_{l \in C_b} \brak{\ang{l,\mu_{\nu}} - \phi(l)}\\
&=& \sup_{l \in C_b} \brak{\sum_n \sum_{m=1}^M \ang{l \mid_{K_n},\nu_{nm}} 
- \psi(l \mid_{K_1}, \dots, l \mid_{K_1}, l \mid_{K_2}, \dots)}\\
&\le& \sup_{u \in U^M} \brak{\ang{u,\nu} - \psi(u)} = \psi^*(\nu).
\eeas
So in the case $\phi(f) < + \infty$, one obtains 
\[
\phi(f) - \varepsilon \le \phi(g) \le \sup_{\mu \in ca^+_r} \brak{\ang{g,\mu} - \phi^*_{C_b}(\mu)}
\le \sup_{\mu \in ca^+_r} \brak{\ang{f,\mu} - \phi^*_{C_b}(\mu)},
\]
and if $\phi(f) = + \infty$, 
\[
\varepsilon \le \phi(g) \le \sup_{\mu \in ca^+_r} \brak{\ang{g,\mu} - \phi^*_{C_b}(\mu)}
\le \sup_{\mu \in ca^+_r} \brak{\ang{f,\mu} - \phi^*_{C_b}(\mu)}.
\]
Since $\varepsilon > 0$ was arbitrary, this yields \eqref{inequsc}. On the other hand, 
it follows from the definition of $\phi^*_{U_b}$ that $\phi(f) \ge \sup_{\mu \in ca^+_r} 
(\ang{f,\mu} -\phi^*_{U_b}(\mu))$. So if $\phi^*_{C_b}(\mu) = \phi^*_{U_b}(\mu)$ 
for all $\mu \in ca^+_r$, the inequality in \eqref{inequsc} becomes an equality.

Finally, by Remark \ref{remlowerreg}.1, $\hat{\phi}(f) = \sup_{\mu \in ca^+_r} (\ang{f,\mu} - \phi^*_{C_b}(\mu))$
is lower regular on $B_b$. So one obtains from the second part of the proof that for all $f \in B_b$,
\[
\phi_r(f) = \sup\{ \phi(g): g \in U_b, g \le f \} \le \sup\{\hat{\phi}(g): g \in U_b, g \le f \} = \hat{\phi}(f),
\]
with equality if $\phi^*_{C_b}(\mu) = \phi^*_{U_b}(\mu)$ for all $\mu \in ca^+_r$.
This completes the proof.
\qed

\bigskip
\noindent
{\bf Proof of Corollary \ref{cor:lin}}\\
By Theorem \ref{thm:Borel}, one has 
\[
\phi(f) = \sup_{\mu \in ca^+_r({\cal F})} (\ang{f,\mu} - \phi^*_{C_b}(\mu))
\quad \mbox{for all } f \in C_b.
\]
In particular, $\phi^*_{C_b}(\mu) < + \infty$ for at least one $\mu \in ca^+_r({\cal F})$. 
Since $\phi$ is linear, this implies that $\phi(f) = \ang{f,\mu}$ for all $f \in C_b$ 
and $\phi^*_{C_b}(\mu) = 0$. Moreover, if $\Omega$ is a metric space, $\mu$ is completely 
determined by the values $\ang{f, \mu}$, $f \in C_b$; see, e.g., \cite{Bil}. So one obtains from 
\eqref{inequsc} and \eqref{ineqBorel} that $\phi(f) \le \ang{f,\mu}$ for all $f \in U_b$
and $\phi_r(f) \le \ang{f,\mu}$ for all $f \in B_b$, with equality if 
$\phi^*_{C_b}(\nu) = \phi^*_{U_b}(\nu)$ for all $\nu \in ca^+_r({\cal F})$.
\qed

\end{document}